\newtheorem{theorem}{Theorem}[section]
\newtheorem{proposition}[theorem]{Proposition}
\theoremstyle{definition}
\newtheorem{definition}[theorem]{Definition}
\theoremstyle{remark}
\newtheorem{remark}[theorem]{Remark}
\numberwithin{equation}{section}
\def\Plambda{(2.13)_{\lambda}}
\def\Pext{(2.13)_{\lambda^\star}}
\begin{document}

\title[Geometric-type Sobolev inequalities and 
applications]{Geometric-type Sobolev inequalities and 
applications to the regularity of minimizers}

\author{Xavier Cabr\'e}
\address{ICREA and Universitat Polit\`ecnica de Catalunya, 
Departament de Matem\`{a}tica  Aplicada I, Diagonal 647, 08028 Barcelona, Spain}
\email{xavier.cabre@upc.edu}
\thanks{The authors 
were supported by grants MTM2008-06349-C03-01, MTM2011-27739-C04-01 (Spain) and 
2009SGR345 (Catalunya)}

\author{Manel Sanch\'on}
\address{Universitat de Barcelona, Departament de Matem\`atica Aplicada i An\`alisi,
Gran Via 585, 08007 Barcelona, Spain}
\email{msanchon@maia.ub.es}

\subjclass[2000]{Primary 
35K57,   
35B65
; Secondary 
35J60
}

\date{}


\keywords{Sobolev inequalities, mean curvature of level sets, 
semilinear equations, regularity of stable solutions}

\begin{abstract}
The purpose of this paper is twofold. We first prove a
weighted Sobolev inequality and part of a weighted Morrey's 
inequality, where the weights are a power of the mean curvature 
of the level sets of the function appearing in the inequalities. 
Then, as main application of our inequalities, we establish new 
$L^q$ and $W^{1,q}$ estimates for semi-stable solutions of 
$-\Delta u=g(u)$ in a bounded domain $\Omega$ of $\mathbb{R}^n$.  
These estimates lead to an $L^{2n/(n-4)}(\Omega)$ bound for the 
extremal solution of $-\Delta u=\lambda f(u)$ when $n\geq 5$ 
and the domain is convex. We recall that extremal solutions are 
known to be bounded in convex domains if $n\leq 4$, and that
their boundedness is expected ---but still unkwown--- 
for $n\leq 9$.
\end{abstract}

\maketitle

\section{Introduction}\label{section1}
The main purpose of this paper is twofold. On the one hand, we prove 
the following geometric-type Sobolev and Morrey's inequalities for functions 
$v\in C_0^\infty(\overline{\Omega})$, where $\Omega$ is a smooth bounded 
domain of $\mathbb{R}^n$ with $n\geq 2$.
Assume that $p\geq 1$ and $r\in\{0\}\cup[1,\infty)$. 
Then, there exists a constant $C$ depending 
only on $n$, $p$, and $r$, such that the following inequalities hold
for all $v\in C_0^\infty(\overline{\Omega})$:
$$
\|v\|_{L^{p^\star_r}(\Omega)}\leq C\Big\||H_v|^r|\nabla v|\Big\|_{L^p(\Omega\cap\{|\nabla v|>0\})}
\qquad\textrm{if }n>p(1+r)
$$
and
$$
\|v\|_{L^{\infty}(\Omega)}\leq C|\Omega|^\frac{p(1+r)-n}{np}
\Big\||H_v|^r|\nabla v|\Big\|_{L^p(\Omega\cap\{|\nabla v|>0\})}
\qquad\textrm{if }1+r\leq n<p(1+r).
$$
Here, the critical exponent $p_r^\star$ is defined by 
$$
\frac{1}{p^\star_r}:=\frac{1}{p}-\frac{1+r}{n}
$$ 
and the function $H_v$ appearing in the right hand side of both inequalities
denotes the mean curvature of the level sets of $|v|$ (which are 
smooth hypersurfaces at points where $|\nabla v|>0$). In particular, 
it depends on $v$ in a nonlinear way, given by the expression
$$
H_v=\frac{-1}{n-1}\ {\rm div}\left(\frac{\nabla v}{|\nabla v|}\right).
$$
We also establish a related inequality of Trudinger type when $n=p(1+r)$.

On the other hand, as an application of these inequalities, we derive 
new  $L^q$ and $W^{1,q}$ \textit{a priori} estimates for local minimizers 
(and more generally, for semi-stable solutions) of reaction-diffusion problems. 
These estimates motivated the study of the geometric Sobolev inequalities above.
 
Consider the reaction-diffusion problem
\begin{equation}\label{problem}
\left\{
\begin{array}{rcll}
-\Delta u&=&g(u)&\textrm{in }\Omega,\\
u&=&0&\textrm{on }\partial \Omega,
\end{array}
\right.
\end{equation}
where $g$ is any $C^1$ function. We say that a classical solution 
$u\in C^2(\overline{\Omega})$ of \eqref{problem} is \textit{semi-stable} 
if 
\begin{equation}\label{semi-stab1}
\int_\Omega \{|\nabla\xi|^2-g'(u)\xi^2\}\ dx\geq 0
\qquad\textrm{for all }\xi\in C_0^1(\overline{\Omega}).
\end{equation}
This class of solutions includes local minimizers of the associated 
energy functional, minimal solutions, extremal solutions, and also 
certain solutions found between a sub and a super solution. We use 
the semi-stability condition \eqref{semi-stab1} with the test function 
$\xi=|\nabla u|\eta$. Using this choice of $\xi$ and an equation for
$(\Delta+g'(u))|\nabla u|$, one deduces that
\begin{equation}\label{semi1}
(n-1)\int_{\Omega\cap\{|\nabla u|>0\}} H_u^2|\nabla u|^2\eta^2\ dx
\leq \int_\Omega |\nabla u|^2|\nabla\eta|^2 \ dx
\end{equation}
for every Lipschitz function $\eta$ in $\overline\Omega$ with 
$\eta|_{\partial\Omega}\equiv0$. We take $\eta\equiv 1$ in a 
compact set $K\subset\Omega$, and thus $|\nabla\eta|$ is supported 
in $\overline{\Omega}\setminus K$. Then, if we know that $u$ is regular in a 
neighborhood of $\partial\Omega$ (this holds for instance when $\Omega$ 
is convex) and we take $K$ big enough, the right hand side of 
\eqref{semi1} is bounded. We deduce that 
$$
\int_{K\cap\{|\nabla u|>0\}} H_u^2|\nabla u|^2\ dx\leq C
$$
and, with the help of our Sobolev inequality above with $r=1$ and $p=2$, we 
establish a new bound:
$$
u\in L^{2n/(n-4)}(\Omega)\quad \textrm{ if }n\geq 5\textrm{ and }\Omega\textrm{ is convex}.
$$
Moreover, using this $L^{2n/(n-4)}$ estimate, we are also able to 
obtain $W^{1,q}$ bounds for semi-stable solutions.
This result completes the $L^\infty$ estimate obtained by the 
first author in \cite{Cabre09} whenever $n\leq 4$ and $\Omega$ is convex. 

For general domains and increasing positive and convex nonlinearities $g$, Nedev \cite{Nedev} 
proved an $L^\infty$ bound when $n\leq 3$, and an $L^q$ estimate, for every
$q<n/(n-4)$ when $n\geq 4$. Note that the exponent $2n/(n-4)$ in our $L^q$ bound above 
improves the one of Nedev. Besides, we make no assumption on the nonlinearity,
but in contrast with Nedev's result, we assume $\Omega$ to be convex.

\section{Main results}

\subsection{Geometric-type Sobolev inequalities}\label{subsection1:1}

We start stating the Sobolev and Morrey's type inequalities 
involving the mean curvature of the level sets.
\begin{theorem}\label{Theorem:Sobolev}
Let $\Omega$ be a smooth bounded domain of $\mathbb{R}^n$, with $n\geq 2$. 
Let $p\geq 1$ and $r\in\{0\}\cup[1,\infty)$.

Let $v\in C^\infty_0(\overline{\Omega})$ $($\textit{i.e.}, 
$v\in C^\infty(\overline{\Omega})$ and $v=0$ on $\partial\Omega)$. 
For $x\in\Omega$ with $\nabla v(x)\neq 0$, let $H_v(x)$ be the mean curvature 
at $x$ of the hypersurface $\{y\in\Omega: |v(y)|=|v(x)|\}$, which is smooth at $x$. 
The following assertions hold:

\begin{enumerate}
\item[$(a)$] Assume either that $1+r\leq n<p(1+r)$ or that $n=1+r$ and $p=1$. Then 
\begin{equation}\label{Sobolev:infinity}
\|v\|_{L^\infty(\Omega)}
\leq C_1|\Omega|^{\frac{p(1+r)-n}{np}}
\left(\int_{\Omega\cap\{|\nabla v|>0\}} |H_v|^{pr}|\nabla v|^p \ dx\right)^{1/p},
\end{equation}
for some constant $C_1$ depending only on $n$, $p$, and $r$.

\item[$(b)$] If $n>p(1+r)$, then 
\begin{equation}\label{Sobolev}
\left(\int_{\Omega}|v|^{p_r^\star}\ dx\right)^{1/p_r^\star}
\leq C_2\left(\int_{\Omega\cap\{|\nabla v|>0\}} |H_v|^{pr}|\nabla v|^p \ dx\right)^{1/p},
\end{equation}
where $\displaystyle \frac{1}{p^\star_r}:=\frac{1}{p}-\frac{1+r}{n}$, 
for some constant $C_2$ depending only on $n$, $p$, and~$r$.

\item[$(c)$] If $p>1$ and $n=p(1+r)$, then
\begin{equation}\label{quasilinear:b2}
\int_\Omega\exp\left\{\left(\frac{|v|}{C_3 (\int_{\Omega\cap\{|\nabla v|>0\}}
|H_v|^{pr}|\nabla v|^p\ dx)^{1/p}}\right)^{p'}\right\}\ dx
\leq C_4|\Omega|,
\end{equation}
where $p'=p/(p-1)$, and $C_3$ and $C_4$ are positive constants
depending only on $n$ and $p$.
\end{enumerate}
\end{theorem}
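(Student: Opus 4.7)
My starting point is to reduce the desired inequalities to estimates on level sets via the coarea formula. For almost every $t > 0$, Sard's theorem guarantees that $\Sigma_t := \{|v|=t\}$ is a smooth closed hypersurface of $\mathbb{R}^n$ on which $|\nabla v|>0$, and writing $\mu(t) := |\{|v|>t\}|$, the coarea formula gives both
\[
-\mu'(t) = \int_{\Sigma_t}\frac{dH^{n-1}}{|\nabla v|}
\qquad\text{and}\qquad
\int_\Omega |H_v|^{pr}|\nabla v|^p\,dx = \int_0^\infty\!\!\int_{\Sigma_t}|H_v|^{pr}|\nabla v|^{p-1}\,dH^{n-1}\,dt.
\]
The plan is to bound $\mu(t)$ by the inner integral on the right, up to a suitable power of $-\mu'(t)$, and then integrate in $t$.

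The base case $r=p=1$ combines the Euclidean isoperimetric inequality $\mu(t)^{(n-1)/n}\leq C|\Sigma_t|$ with the Michael--Simon--Sobolev inequality applied to the constant $f\equiv 1$ on $\Sigma_t$, which gives $|\Sigma_t|^{(n-2)/(n-1)}\leq C\int_{\Sigma_t}|H_v|\,dH^{n-1}$. Combining these and raising to suitable powers produces the pointwise geometric inequality
\[
\mu(t)^{(n-2)/n} \;\leq\; C(n)\int_{\Sigma_t}|H_v|\,dH^{n-1}.
\]
Integrating in $t$ and applying the coarea formula on the right, together with the Minkowski/layer-cake bound $\|v\|_{L^q(\Omega)}\leq \int_0^\infty\mu(t)^{1/q}\,dt$, yields (b) in the case $r=p=1$.

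Next I would bootstrap in $p$, keeping $r=1$, by a Moser-type argument: apply the inequality just obtained to $w := \mathrm{sgn}(v)\,|v|^\gamma$. Since the level sets of $|w|$ coincide with those of $|v|$, we have $|H_w|=|H_v|$, while $|\nabla w|=\gamma|v|^{\gamma-1}|\nabla v|$. H\"older's inequality with conjugate exponents $p,p'$ on the right-hand side, together with the choice $\gamma = p(n-2)/(n-2p)$, makes $\|v\|_{L^{p_1^\star}}$ appear on both sides with matching exponents, and absorbing this norm produces (b) for $r=1$ and arbitrary $p\geq 1$. To reach general $r\geq 1$ I would iterate: either bootstrap analogously in $r$ starting from the base case, or first establish the sharpened pointwise estimate $\mu(t)^{(n-1-r)/n}\leq C\int_{\Sigma_t}|H_v|^r\,dH^{n-1}$ and then repeat the base argument.

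With the appropriate pointwise estimate in hand, H\"older on $\Sigma_t$ with exponents $p,p'$ combined with the coarea identity $\int_{\Sigma_t}|\nabla v|^{-1}\,dH^{n-1}=-\mu'(t)$ upgrades it to
\[
\mu(t)^{p(n-1-r)/n}\;\leq\; C\Bigl(\int_{\Sigma_t}|H_v|^{pr}|\nabla v|^{p-1}\,dH^{n-1}\Bigr)(-\mu'(t))^{p-1}.
\]
The three regimes (a), (b), (c) correspond to the sign of $n-p(1+r)$, and each is obtained by integrating this ODE-type inequality in $t$ against an appropriate weight: the $L^{p_r^\star}$ bound (b) when $n>p(1+r)$; the $L^\infty$ bound (a) when $n<p(1+r)$, since the ODE then forces $\mu(t)$ to vanish at an explicit level; and the Trudinger-type inequality (c) when $n=p(1+r)$, via a logarithmic integration. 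The most delicate step is establishing the sharpened pointwise estimate for $r>1$: applying Michael--Simon--Sobolev to $f=|H_v|^{r-1}$ produces an intrinsic tangential gradient term $|\nabla^{\Sigma_t}|H_v|^{r-1}|$ that is not controllable by the ambient data, so one must route around this obstacle either through the Moser bootstrap in $p$ or via a delicate smoothing/iteration argument in $r$.
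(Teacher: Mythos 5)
Your overall strategy is the same as the paper's: reduce to the pointwise level-set inequality
\[
\mu(t)^{\frac{n-(1+r)}{n}}\;\leq\; C\int_{\Sigma_t}|H_v|^{r}\,dH^{n-1},
\]
then use H\"older and the coarea identity $-\mu'(t)=\int_{\Sigma_t}|\nabla v|^{-1}$ to get the ODE-type bound on $\mu$, and integrate to obtain $(a)$, $(b)$, $(c)$. The $p>1$ substitution $w=|v|^\gamma$ (with $\gamma$ matching the critical exponents, equivalently $\gamma=p_r^\star/1_r^\star$) is also exactly what the paper does for $(b)$.

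However, there is a genuine gap at the step you yourself flag as ``the most delicate'': you do not know how to prove the pointwise estimate for $r>1$, and the two escape routes you propose do not close it. First, you suggest applying the Michael--Simon Sobolev inequality to $f=|H_v|^{r-1}$ on $\Sigma_t$, and correctly observe this produces an uncontrollable tangential gradient term. But this is the wrong tool: you only need the \emph{isoperimetric} version (the inequality for the ``constant function'' case), $|\Sigma_t|^{\frac{n-2}{n-1}}\leq A_2\int_{\Sigma_t}|H_v|\,dH^{n-1}$, which has no gradient term. Raising this to the $r$-th power and applying Jensen (or H\"older) on the probability measure $dH^{n-1}/|\Sigma_t|$ gives
\[
|\Sigma_t|^{\frac{r(n-2)}{n-1}}\;\leq\; A_2^{\,r}\Bigl(\int_{\Sigma_t}|H_v|\,dH^{n-1}\Bigr)^{r}
\;\leq\; A_2^{\,r}\,|\Sigma_t|^{\,r-1}\int_{\Sigma_t}|H_v|^{r}\,dH^{n-1},
\]
and since $\frac{r(n-2)}{n-1}-(r-1)=\frac{n-(1+r)}{n-1}$, combining with the Euclidean isoperimetric inequality immediately yields the needed $\mu(t)^{\frac{n-(1+r)}{n}}\leq C\int_{\Sigma_t}|H_v|^{r}$ for all $r\geq 1$. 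Second, the ``Moser bootstrap in $p$'' (the substitution $w=|v|^\gamma$) cannot raise the power of $H_v$: since level sets of $|v|$ and $|v|^\gamma$ coincide, $|H_w|=|H_v|$ exactly, so iterating it in $p$ always keeps the weight $|H_v|^p|\nabla v|^p$, never reaching $|H_v|^{pr}|\nabla v|^p$ for $r>1$. An ``iteration in $r$'' likewise has no mechanism to produce higher powers of $H_v$ from lower ones. So the Jensen step is irreplaceable, and it is precisely the idea your write-up is missing.

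One smaller point: for part $(b)$, integrating the ODE for $\mu$ directly (as you suggest ``against an appropriate weight'') gives the $L^q$ inequality only for $q$ strictly less than $p_r^\star$; to hit the critical exponent one must either use decreasing rearrangement or the $w=|v|^\gamma$ substitution starting from the $p=1$ case (which is what the paper does). You do mention the substitution, so keep that route for $(b)$ and reserve the ODE integration for $(a)$ and $(c)$.
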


In Remarks \ref{Remark:Sobolev} and \ref{Remark:Sobolev2} we give 
explicit expressions for admissible values of the constants $C_i$, 
$i=1,...,4$, in the theorem. These expressions involve two 
isoperimetric constants $A_1$ and $A_2$ (only $A_1$ when $r=0$) that 
we describe next. 

Note that Theorem~\ref{Theorem:Sobolev} is well known for $r=0$. 
Indeed, $(a)$ states a part of Morrey's inequality, 
$(b)$ is the classical Sobolev inequality, and $(c)$ 
is Trudinger's inequality. It is well known that 
they follow from the classical isoperimetric inequality, 
which states that for any smooth bounded domain $D$ of $\mathbb{R}^n$,
\begin{equation}\label{isopctant}
A_1|D|^{(n-1)/n}\leq |\partial D|
\end{equation}
where $A_1=n|B_1|^{1/n}$ and $B_1$ denotes the unit ball in $\mathbb{R}^n$. 
Our proof will show this fact and that 
admissible constants in the theorem are completely explicit in 
terms only of $A_1$, $n$, and $p$ when $r=0$. 

To establish the theorem when $r\geq1$ we need another 
isoperimetric inequality. It involves the mean curvature $H$ of $C^2$ immersed 
$(n-1)$-dimensional compact hypersurfaces without boundary $S\subset 
\mathbb{R}^{n}$, and states
\begin{equation}\label{isop:mean}
|S|^{\frac{n-2}{n-1}} \leq A_2\int_S |H(x)|\ d\sigma.
\end{equation}
Here, $H$ is the mean curvature of $S$, $d\sigma$ denotes the area 
element in $S$, and $A_2$ is a universal constant depending only on the dimension 
$n\geq 2$. When $n=2$, \eqref{isop:mean} follows from the Gauss-Bonnet 
formula. When $n\geq 3$, the inequality is due to Michael and Simon~\cite{MS} 
and to Allard \cite{A} ---see Theorem~28.4.1 \cite{Burago} for a more general 
version of \eqref{isop:mean}. From such a version, a Sobolev 
inequality for functions defined on hypersurfaces $S$ of $\mathbb{R}^n$, and which 
involves the mean curvature $H$ of $S$, can be deduced (see section 28.5 of \cite{Burago}, 
Theorem~2.1~\cite{Cabre09}, or Theorem~C.2.1~\cite{Dupaigne}).

\begin{remark}[{\it The critical exponents}\rm]\label{critical}
Note that the critical exponent $p_r^\star$ in part $(b)$ of the theorem coincides 
with the classical Sobolev exponent in the embedding $W^{1+r,p}\subset
L^{p_r^\star}$ for functions with $1+r$ derivatives in $L^p$.

The critical case in Theorem~\ref{Theorem:Sobolev} corresponds to 
$n=p(1+r)$. It is given in part $(a)$ when $p=1$ and 
in part $(c)$ when $p>1$. In the second case, $p>1$, the $L^\infty$ estimate 
does not necessarily hold, as usual. This can be easily seen using radial 
functions when $\Omega$ is a ball. Instead, the embedding in 
$L^\infty$ holds in the critical case when $p=1$ (and thus $n=1+r$), 
as in the classical case $W^{n,1}\subset L^\infty$.

Note that in all cases of Theorem~\ref{Theorem:Sobolev} we have $1+r\leq n$. 
In the case $p=1$ and $n<1+r$, which is not covered by Theorem~\ref{Theorem:Sobolev}, 
we derive an inequality involving the total variation of $|v|$ in Remark~\ref{Rmk:perimeter}. 
\end{remark}

\begin{remark}[{\it The case $p=+\infty$}\rm]\label{r:p_inf}
Letting $p$ tend to $+\infty$ in \eqref{Sobolev:infinity} and using the explicit 
constant $C_1$ obtained in Remark \ref{Remark:Sobolev}, we deduce
\begin{equation}\label{sharpext}
\|v\|_{L^\infty(\Omega)}
\leq 
\frac{n}{1+r}\Big(n|B_1|^{1/n}\Big)^{\frac{1+r-n}{n-1}}A_2^r|\Omega|^\frac{1+r}{n}
\ \||H_v|^r|\nabla v|\|_{L^\infty(\Omega\cap\{|\nabla v|>0\})}
\end{equation}
when $n\geq 2$ and $1\leq r \leq n-1$. Here, $A_2$ is a constant 
depending only on the dimension $n$ for which \eqref{isop:mean} holds. 
\end{remark}

\begin{remark}[{\it The case $r\in(0,1)$}\rm]\label{r01}
Theorem~\ref{Theorem:Sobolev} is stated for $r=0$ and $r\geq 1$. 
A natural question is if it does not hold for $r\in(0,1)$ independently 
of the dimension~$n$. In this direction, in 
Remark~\ref{stadium} we prove that Theorem~\ref{Theorem:Sobolev} $(a)$-$(b)$ do not 
hold for $r\in(0,2p^{-1}-1)$ when $1\leq p <2$, independently of the dimension. 
In particular, they do not hold for $r\in(0,1)$ when $p=1$. 
\end{remark}

For the class of mean convex functions
---that is, functions whose level sets 
have nonnegative mean curvature--- the estimates in Theorem~\ref{Theorem:Sobolev} 
can be established in the larger range $r\geq 1/p$.
The argument only applies to mean convex 
functions since it relies on the fact that the perimeter of the 
level sets of a mean convex function~$v$ is a nonincreasing function, 
\textit{i.e.}, $|\{x\in\Omega: |v(x)|=t_1\}|\geq 
|\{x\in\Omega: |v(x)|=t_2\}|$ for a.e. $0<t_1<t_2$. 
When $r=1/p$, such estimates were proven by Trudinger~\cite{Trudinger97}.
The inequalities in \cite{Trudinger97} carry optimal constants  
and are claimed there to hold for all mean convex
functions. However, at present they are only known to hold for functions
with {\it starshaped} and mean convex level sets. The reason is that
to obtain optimal constants one needs to use inequality \eqref{isop:mean}
with the constant $A_2$ which makes \eqref{isop:mean} to be 
an equality when $S$ is a sphere. That such constant $A_2$ is admissible 
in \eqref{isop:mean} is still only known among {\it starshaped} mean convex
hypersurfaces~$S$, by a recent result of Guan and Li~\cite{GL}; see also~\cite{GMTX}.

Theorem~\ref{Theorem:Sobolev} can be used to study the geometric flow
of mean convex hypersurfaces driven by a positive power $r$ of their mean curvature,
the so-called $H^r$-flow.
The theorem leads, for instance, to upper bounds on the extinction time
of the flow. In the level set formulation, the flow can be represented by the
level sets of a mean convex function $v$ satisfying the elliptic equation
$$
H_v=\frac{-1}{n-1}\ {\rm div}\left(\frac{\nabla v}{|\nabla v|}\right)=\frac{1}{|\nabla v|^{1/r}}.
$$
Noting that $\||H_v|^r|\nabla v|\|_{L^\infty(\Omega\cap\{|\nabla v|>0\})}=1$ 
and using \eqref{sharpext} one obtains an $L^\infty$ estimate for $v$, 
or equivalently, an upper bound for the extinction time of the $H^r$-flow.
Let us mention here that Schulze~\cite{S08} used the ${H}^r$-flow
to give a new proof of a deep result
of B. Kleiner: the Euclidean isoperimetric inequality also 
holds for domains of any complete and simply-connected 3-dimensional manifold
with nonpositive sectional curvatures ---a result that is still open for 
the same type of manifolds of dimension $n\geq 5$.

In this respect, Theorem~\ref{Theorem:Sobolev} could be extended to the case of functions defined on 
Riemannian manifolds. Indeed, the first ingredient in our proof ---the coarea formula--- holds 
on any Riemannian manifold. On the other hand, the isoperimetric inequalities that 
we use to prove the theorem could be replaced by those in the particular
manifold; see section 36.5 of \cite{Burago}.

\begin{remark}[{\it The radial case}\rm]\label{rmk:radial}
When $\Omega=B_R=B_R(0)$, if we restrict inequality \eqref{Sobolev} 
to radially symmetric functions $v$ with compact support in $B_R$ 
then \eqref{Sobolev} reads 
\begin{equation}\label{radial}
\left(\int_0^R |v(\rho)|^q \rho^{n-1}\ d\rho\right)^{1/q}\leq 
C\left(\int_0^R \rho^{-pr}|v'(\rho)|^p\ \rho^{n-1}\ d\rho\right)^{1/p},
\end{equation}
where $q=p_r^\star$. Here $\rho=|x|$. Note that in the radial case, the level set 
at $x$, $\{|v|=|v(x)|\}$, is a sphere of radius $|x|$, and thus 
the average of its principal curvatures is $H_v(x)=|x|^{-1}=\rho^{-1}$.
The 1-dimensional weighted Sobolev inequality \eqref{radial} 
has been well studied (see \cite{Kufner} for this one and more general 
versions). It is well known that \eqref{radial} holds, with a constant $C$
independent of $v$, if and only if either $n< p(1+r)$ and $q\leq +\infty$, 
or $n>p(1+r)$ and $q\leq p_r^\star$, 
or $n= p(1+r)$ and $q<+\infty$.
This shows that Theorem~\ref{Theorem:Sobolev}~$(b)$ is sharp in terms 
of the exponents that it involves and the restrictions on them. The 
sharpness in this same sense of parts $(a)$ and $(c)$ of 
Theorem~\ref{Theorem:Sobolev} can also be checked using radially  
decreasing functions.
\end{remark}

\begin{remark}[{\it Relation with a Caffarelli-Kohn-Nirenberg inequality}\rm]\label{CaKoNi}
Since $H_v(x)=|x|^{-1}$ for radial functions, Theorem~\ref{Theorem:Sobolev} $(b)$ is 
related to the Caffarelli, Kohn, and Nirenberg inequality \cite{CKN}, which 
states the following. 
\textit{Assume $q>0$, $p\geq 1$, and $n>pr$. 
Then, there exists a positive constant $C$ such that 
\begin{equation}\label{ineq:CKN}
\|v\|_{L^q(\mathbb{R}^n)}\leq C\||x|^{-r}|\nabla v|\|_{L^p(\mathbb{R}^n)}
\end{equation}
holds for all $v\in C_0^\infty(\mathbb{R}^n)$,
if and only if $q=p_r^\star$ and $-1\leq r\leq 0$.}
Here, the condition $r\leq 0$ is due to the unboundedness 
of the domain and to the fact that the singularity of the weight is fixed 
at the origin ---and thus \eqref{ineq:CKN} is not invariant under translations. 
Indeed, that $r\leq 0$ is necessary in \eqref{ineq:CKN} can be shown by 
taking $v(x)=u(x-x_0)$ with $u\in C_0^\infty(B_1)$ and letting $|x_0|\rightarrow 
+\infty$.

Instead, our inequalities are invariant under translations.
\end{remark}

The second part of this paper is devoted to obtain, as an application of 
Theorem~\ref{Theorem:Sobolev}, \textit{a priori} estimates for semi-stable 
solutions of the reaction-diffusion problem \eqref{problem} ---which motivated 
the present work.

\subsection{Application to the regularity of stable solutions and extremal 
solutions}\label{subsection1:2}
Applying Theorem~\ref{Theorem:Sobolev} we obtain \textit{a priori} estimates 
for semi-stable solutions of \eqref{problem}. In particular, for the extremal 
solution $u^\star$ of $\Plambda$ below ---\textit{i.e.}, problem \eqref{problem} 
when $g(u)=\lambda f(u)$.

Recently, the first author proved the boundedness of 
the extremal solution of $\Plambda$ when the domain is convex and $n\leq 4$. 
Our following result is the main application of Theorem~\ref{Theorem:Sobolev}.
We establish an $L^\frac{2n}{n-4}$ estimate for the extremal 
solution in convex domains when $n\geq 5$. For these domains, the result improves 
the $L^q$ for $q<n/(n-4)$ and the $L^\frac{2n}{n-2}$ estimates of Nedev  proved, 
respectively, in \cite{Nedev} and \cite{Nedev01}.

\begin{theorem}\label{Theorem2}
Let $f:[0,+\infty)\longrightarrow\mathbb{R}$ be an increasing positive $C^1$ function 
$($in particular with $f(0)>0)$ such that 
$f(t)/t\rightarrow+\infty$ as $t\rightarrow+\infty$. Assume 
that $\Omega$ is a convex smooth bounded domain of $\mathbb{R}^n$ with 
$n\geq 5$. Let $u^\star$ be the extremal solution of $\Plambda$. 
Then, $$u^\star\in L^\frac{2n}{n-4}(\Omega).$$
\end{theorem}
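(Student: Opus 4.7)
The plan is to obtain a uniform $L^{2n/(n-4)}$ bound for the family of minimal solutions $u_\lambda$, $\lambda\in(0,\lambda^\star)$, and then to pass to the limit by monotone convergence, using that $u_\lambda\nearrow u^\star$ pointwise and that each $u_\lambda$ is a $C^2(\overline{\Omega})$ semi-stable solution of $-\Delta u=\lambda f(u)$.

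For each $u_\lambda$, the semi-stability inequality applied with test function $\xi=|\nabla u_\lambda|\eta$ yields \eqref{semi1}, that is,
$$
(n-1)\int_{\Omega\cap\{|\nabla u_\lambda|>0\}} H_{u_\lambda}^2|\nabla u_\lambda|^2\eta^2\,dx\leq \int_\Omega |\nabla u_\lambda|^2|\nabla\eta|^2\,dx,
$$
for every Lipschitz $\eta$ vanishing on $\partial\Omega$. I would choose $\eta\equiv 1$ on a compact set $K\subset\Omega$ whose complement in $\Omega$ is a small tubular neighborhood of $\partial\Omega$. Since $\Omega$ is convex, the method of moving planes of de Figueiredo--Lions--Nussbaum furnishes a uniform $L^\infty$ bound for $u_\lambda$ near $\partial\Omega$; combined with the Hopf lemma (giving $|\nabla u_\lambda|$ uniformly bounded away from zero there) and standard elliptic regularity, this produces uniform $C^2$ bounds for $u_\lambda$ on $\Omega\setminus K$. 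Consequently, the right-hand side above is uniformly bounded in $\lambda$, and $|H_{u_\lambda}|$ is uniformly controlled on $\Omega\setminus K$ as well. Adding the contributions from $K$ and from $\Omega\setminus K$ yields
$$
\int_{\Omega\cap\{|\nabla u_\lambda|>0\}}H_{u_\lambda}^2|\nabla u_\lambda|^2\,dx\leq C,
$$
with $C$ independent of $\lambda$.

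I would then apply Theorem~\ref{Theorem:Sobolev}(b) with $p=2$ and $r=1$ to $v=u_\lambda$: this is admissible because $n\geq 5>4=p(1+r)$ and the critical exponent is $p^\star_r=2n/(n-4)$, and the hypothesis $v\in C^\infty_0(\overline{\Omega})$ can be relaxed to $v\in C^2_0(\overline{\Omega})$ by a routine approximation argument (the right-hand side only involves first and second derivatives of $v$). The resulting bound $\|u_\lambda\|_{L^{2n/(n-4)}(\Omega)}\leq C$, uniform in $\lambda$, together with $u_\lambda\nearrow u^\star$ and Fatou's lemma, gives $u^\star\in L^{2n/(n-4)}(\Omega)$. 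The main obstacle I anticipate is the uniform boundary regularity of the $u_\lambda$, for which convexity of $\Omega$ is essential; this is the same geometric input that underlies the $L^\infty$ estimate of the first author in \cite{Cabre09} for $n\leq 4$. Beyond this ingredient, the proof is a direct combination of the curvature form of the semi-stability identity with the geometric Sobolev inequality of Theorem~\ref{Theorem:Sobolev}.
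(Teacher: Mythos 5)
Your proposal is correct and shares the paper's essential skeleton --- semi-stability with $\xi=|\nabla u_\lambda|\eta$, boundary control via the moving-planes estimate of Proposition~\ref{Prop4}, the geometric Sobolev inequality of Theorem~\ref{Theorem:Sobolev}(b) with $p=2$, $r=1$, and passage to the limit $\lambda\uparrow\lambda^\star$. The main structural difference is in how the boundary strip is handled. You aim for a uniform bound on the \emph{full} integral $\int_{\Omega\cap\{|\nabla u_\lambda|>0\}}H_{u_\lambda}^2|\nabla u_\lambda|^2\,dx$ and then apply the Sobolev inequality directly to $v=u_\lambda$; this forces you to control $H_{u_\lambda}^2|\nabla u_\lambda|^2$ pointwise on the boundary collar $\Omega\setminus K$, which in turn requires uniform $C^2$ bounds there \emph{together with} a uniform positive lower bound on $|\nabla u_\lambda|$ (via Hopf, or more simply by comparison with $u_{\lambda^\star/2}$). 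The paper instead never bounds $H_{u_\lambda}$ pointwise near $\partial\Omega$: it always works on the superlevel set $\{u_\lambda>s\}$ and applies the Sobolev inequality to $v=u_\lambda-s$ there, so it only needs the inclusion $\{u_\lambda\le s\}\supset\Omega_\varepsilon$ and a bound on $\int_{\{u_\lambda\le s\}}|\nabla u_\lambda|^2$ (obtained cheaply by multiplying the equation by $T_s u_\lambda$) or on $\|u_\lambda\|_{W^{1,4}(\Omega_{\varepsilon/2})}$. Your version is heavier on regularity input but buys a slightly cleaner final step (no truncation of $u_\lambda$ before invoking Theorem~\ref{Theorem:Sobolev}).

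One small inaccuracy: the claim that the hypothesis $v\in C_0^\infty(\overline\Omega)$ in Theorem~\ref{Theorem:Sobolev} ``can be relaxed to $v\in C_0^2$ by a routine approximation'' is delicate, because the proof of Theorem~\ref{Theorem:Sobolev} uses Sard's theorem to ensure a.e.\ level set is a smooth hypersurface, and Sard fails for $C^2$ maps $\mathbb{R}^n\to\mathbb{R}$ once $n\ge 3$. The clean remedy, used in the paper, is to first assume $f\in C^\infty$ (so each $u_\lambda\in C^\infty(\overline\Omega)$, and Theorem~\ref{Theorem:Sobolev} applies directly) and then handle $f\in C^1$ by approximating the nonlinearity rather than the solution; you should replace your approximation step accordingly.
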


The convexity assumption on the domain $\Omega$ is only used to control the 
$L^\infty$-norm of $u^\star$ in a neighborhood of the boundary $\partial\Omega$. 
For general domains and general nonlinearities we are able to prove the 
following \textit{a priori} estimates for semi-stable solutions
---from which Theorem~\ref{Theorem2} will follow easily.
\begin{theorem}\label{Theorem}
Let $g$ be any $C^\infty$ function and $\Omega\subset\mathbb{R}^n$ any smooth 
bounded domain with $n\geq5$. Let $u\in C^1_0(\overline{\Omega})$ be a semi-stable 
solution of \eqref{problem}, \textit{i.e.}, a solution satisfying \eqref{semi-stab1}. 
Then, 
\begin{equation}\label{Lq:estimate}
\left(\int_{\{|u|>s\}} \Big(|u|-s\Big)^{\frac{2n}{n-4}}\ dx\right)^\frac{n-4}{2n} 
\leq \frac{C(n)}{s}\left(\int_{\{|u|\leq s\}} |\nabla u|^4 \ dx\right)^{1/2}
\end{equation}
for all $s>0$, where $C(n)$ is a constant depending only on $n$. Moreover, 
\begin{equation}\label{grad:estimate}
\int_\Omega |\nabla u|^p\ dx\leq p|\Omega|+ \left(\frac{4n}{(3n-4)p}-1\right)^{-1}
\left\{
\int_\Omega|u|^{\frac{2n}{n-4}}\ dx+\|g(u)\|_{L^1(\Omega)}
\right\}
\end{equation}
for all $1\leq p<\frac{4n}{3n-4}$.
\end{theorem}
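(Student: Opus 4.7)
The plan is to treat the two estimates independently. For \eqref{Lq:estimate}, I would combine Theorem~\ref{Theorem:Sobolev}(b) (with $p=2$ and $r=1$) with the semi-stability consequence \eqref{semi1}, while for \eqref{grad:estimate} I would use a direct test-function argument against the equation $-\Delta u=g(u)$.

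For \eqref{Lq:estimate}, I would apply Theorem~\ref{Theorem:Sobolev}(b) with $p=2$ and $r=1$, so that $p(1+r)=4<n$ and $p^\star_r=2n/(n-4)$, to a smooth approximation of $v:=(|u|-s)_+$; for instance $v_\varepsilon:=\phi_\varepsilon(|u|)$ where $\phi_\varepsilon$ is a smooth monotone approximation of $t\mapsto(t-s)_+$ vanishing on $[0,s-\varepsilon]$. On $\{|u|>s\}$ the positive level sets of $v_\varepsilon$ coincide with those of $|u|$, so $H_{v_\varepsilon}=H_u$ there, and passing $\varepsilon\to 0$ yields
\[\Big(\int_{\{|u|>s\}}(|u|-s)^{\frac{2n}{n-4}}\,dx\Big)^{\frac{n-4}{2n}}\le C(n)\Big(\int_{\{|u|>s\}\cap\{|\nabla u|>0\}}H_u^2\,|\nabla u|^2\,dx\Big)^{1/2}.\]
To bound the right-hand side I would invoke \eqref{semi1} with the Lipschitz cutoff $\eta:=\min(|u|/s,1)$, which vanishes on $\partial\Omega$, equals $1$ on $\{|u|\ge s\}$, and satisfies $|\nabla\eta|=|\nabla u|/s$ on $\{|u|<s\}$. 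This gives
\[(n-1)\int_{\{|u|>s\}\cap\{|\nabla u|>0\}}H_u^2\,|\nabla u|^2\,dx\le\frac{1}{s^2}\int_{\{|u|\le s\}}|\nabla u|^4\,dx,\]
and the two displays combine to yield \eqref{Lq:estimate}.

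For \eqref{grad:estimate}, I would test $-\Delta u=g(u)$ against the $C^1$ function $f(u):=\operatorname{sgn}(u)\bigl(1-(1+|u|)^{-\alpha}\bigr)$, which satisfies $f'(u)=\alpha(1+|u|)^{-(\alpha+1)}$ and $|f|\le 1$. Integration by parts (boundary terms vanish since $u|_{\partial\Omega}=0$) gives the weighted gradient bound
\[\alpha\int_\Omega(1+|u|)^{-(\alpha+1)}\,|\nabla u|^2\,dx\le\|g(u)\|_{L^1(\Omega)}.\]
Next I would factor $|\nabla u|^p=\bigl[(1+|u|)^{-(\alpha+1)}|\nabla u|^2\bigr]^{p/2}(1+|u|)^{p(\alpha+1)/2}$ and apply H\"older with exponents $2/p$ and $2/(2-p)$ to obtain
\[\int_\Omega|\nabla u|^p\,dx\le\bigl(\alpha^{-1}\|g(u)\|_{L^1}\bigr)^{p/2}\Big(\int_\Omega(1+|u|)^{p(\alpha+1)/(2-p)}\,dx\Big)^{(2-p)/2}.\]
The parameter $\alpha$ is fixed so that $p(\alpha+1)/(2-p)=q=2n/(n-4)$, that is, $\alpha=q(2-p)/p-1$; this is positive precisely when $p<2q/(q+1)=4n/(3n-4)$, matching the range in the statement. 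Applying Young's inequality $a^{p/2}b^{(2-p)/2}\le(p/2)a+((2-p)/2)b$, together with the pointwise bound $(1+|u|)^q\le C_q(1+|u|^q)$, yields \eqref{grad:estimate} after rearranging constants.

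The main obstacle is the approximation step for \eqref{Lq:estimate}: because $v=(|u|-s)_+$ only has a Lipschitz corner on $\{|u|=s\}$, one must build smooth approximants $v_\varepsilon\in C_0^\infty(\overline{\Omega})$ and argue that both $|H_{v_\varepsilon}|^2|\nabla v_\varepsilon|^2$ and $|v_\varepsilon|^{2n/(n-4)}$ converge to the correct integrable limits on $\{|u|>s\}\cap\{|\nabla u|>0\}$ and $\Omega$, respectively, before passing $\varepsilon\to 0$ by dominated convergence. In contrast, the computation for \eqref{grad:estimate} is essentially mechanical once one recognizes that matching the weighted $L^2$-bound to the available $L^q$-norm of $u$ forces exactly the threshold $p<4n/(3n-4)$.
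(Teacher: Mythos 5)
Your proof of \eqref{Lq:estimate} follows the paper's route: the cutoff $\eta=\min(|u|/s,1)$ is, up to normalization, the same as the paper's $\eta=T_s u=\max\{-s,\min\{s,u\}\}$, and yields the identical bound $(n-1)\int_{\{|u|>s\}\cap\{|\nabla u|>0\}}H_u^2|\nabla u|^2\,dx\le s^{-2}\int_{\{|u|\le s\}}|\nabla u|^4\,dx$. Where you diverge is in invoking Theorem~\ref{Theorem:Sobolev}(b): the paper does not mollify $(|u|-s)_+$ at all. Instead it applies the Sobolev inequality to $v=u-s$ with $\Omega$ replaced by each connected component of $\{u>s\}$ (smooth for a.e.\ $s$ by Sard's theorem, with $v$ smooth up to the boundary and vanishing there), does the same for $v=-u-s$ on $\{-u>s\}$, and sums the resulting inequalities (using $q=2n/(n-4)\ge 2$ to combine the left-hand sides). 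This sidesteps the Lipschitz corner of $|u|$ on $\{|u|=s\}$ entirely, which you rightly identify as the delicate point of the approximation route. Your mollification plan is workable but strictly more cumbersome; consider adopting the component-wise application.

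For \eqref{grad:estimate} you use a genuinely different argument from the paper. The paper derives \eqref{grad:estimate} from Proposition~\ref{Thm:bootstrap}, a level-set estimate in the spirit of B\'enilan \emph{et al.}: multiply by $T_s u$ to get $\int_{\{|u|\le s\}}|\nabla u|^2\,dx\le s\|h\|_{L^1}$, then bound the distribution function of $|\nabla u|$. Your approach --- testing with $\operatorname{sgn}(u)\bigl(1-(1+|u|)^{-\alpha}\bigr)$, then H\"older and Young --- is a standard and valid alternative that correctly forces the same threshold $\alpha>0\iff p<4n/(3n-4)$. However, it does \emph{not} reproduce the explicit constants in \eqref{grad:estimate}: your Young step produces a coefficient $\tfrac{2-p}{2}\,2^{q-1}$ in front of $|\Omega|+\int|u|^q$ and a coefficient $\tfrac{p}{2(q+1)}\bigl(\tfrac{p_q}{p}-1\bigr)^{-1}$ in front of $\|g(u)\|_{L^1}$ (using $\alpha=(q+1)(\tfrac{p_q}{p}-1)$), which do not coincide with the $p|\Omega|$ and $\bigl(\tfrac{4n}{(3n-4)p}-1\bigr)^{-1}$ of the statement. ``After rearranging constants'' is therefore glossing over a real discrepancy. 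Your argument proves $u\in W^{1,p}_0$ for all $p<4n/(3n-4)$ with a bound of the same qualitative type (blowing up as $p\uparrow 4n/(3n-4)$), which is the substantive content; but to obtain \eqref{grad:estimate} verbatim one should run the paper's Proposition~\ref{Thm:bootstrap}.
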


Inequality \eqref{Lq:estimate} is relevant since the set $\{|u|\leq s\}$ on its 
right hand side is a small neighborhood of $\partial \Omega$ (at least if $u>0$ 
in $\Omega$) if $s$ is chosen small enough. Thus the $L^{2n/(n-4)}(\Omega)$ bound 
gets reduced to a question on the regularity of $u$ near $\partial\Omega$.

To prove Theorem~\ref{Theorem} we take the truncation 
of $|u|$ at level $s$ as a test function in \eqref{semi1}
to obtain
\begin{equation}\label{eq1:12}
(n-1)s^2\int_{\{|u|>s\}\cap\{|\nabla u|>0\}}H_u^2|\nabla u|^2\ dx
\leq
\int_{\{|u|\leq s\}}|\nabla u|^4\ dx.
\end{equation}
Now, \eqref{Lq:estimate} follows from \eqref{eq1:12} and 
our geometric Sobolev inequality \eqref{Sobolev} 
with $p=2$ and $r=1$. 

When $2 \leq n\leq 3$,  from \eqref{eq1:12} and Theorem~\ref{Theorem:Sobolev} $(a)$, 
it follows that
\begin{equation}\label{eq1:13}
\|u\|_{L^\infty(\Omega)}\leq s+\frac{C(n)}{s}
|\Omega|^\frac{4-n}{2n}\left(\int_{\{|u|\leq s\}}|\nabla u|^4\ dx\right)^{1/2},
\end{equation}
where $C(n)$ is a constant depending only on $n$. The \textit{a priori}
estimate \eqref{eq1:13} was proved by the first author in \cite{Cabre09} in a different 
way, obtaining the $L^\infty$ estimate also in dimension~$4$. 

The gradient estimate \eqref{grad:estimate} follows 
from the $L^{2n/(n-4)}$ bound  
with the aid of a technique introduced by B\'enilan \textit{et al.} \cite{BBGGPV95} 
to prove regularity of entropy solutions for $p$-Laplace equations with 
$L^1$ data (see Proposition~\ref{Thm:bootstrap} below).

The study of the regularity of semi-stable solutions was motivated by 
some open problems raised by Brezis and V\'azquez \cite{BV} about the regularity 
of extremal solutions. 
They appear in the following context. Consider positive solutions of 
\stepcounter{equation}
$$
\left\{
\begin{array}{rcll}
-\Delta u&=&\lambda f(u)&\textrm{in }\Omega,\\
u&=&0&\textrm{on }\partial \Omega,
\end{array}
\right. \leqno{(2.13)_{\lambda}}
$$
where $\lambda>0$ is a parameter and $f$ is a $C^1$ positive increasing function 
defined on $[0,\infty)$ (in particular $f(0)>0$) which is superlinear at 
infinity (\textit{i.e.}, satisfying $f(t)/t\rightarrow+\infty$ as 
$t\rightarrow+\infty$). Under these assumptions (see the excellent monograph 
\cite{Dupaigne} for all these questions),  
there exists an extremal parameter $\lambda^\star\in(0,\infty)$ such that problem 
$\Plambda$ admits a classical minimal solution $u_\lambda$ 
for $\lambda\in(0,\lambda^\star)$ and admits no weak solution (see Definition \ref{Defn:weak_sol})
for 
$\lambda>\lambda^\star$.
By minimality it is easy to show that $u_\lambda$ is 
a semi-stable solution for $\lambda\in(0,\lambda^\star)$. Moreover, 
$$
u^\star:=\lim_{\lambda\uparrow\lambda^\star}u_\lambda
$$
is a weak solution of $\Pext$, known as the \textit{extremal solution}. Thus,
$u^\star$ is a semi-stable weak solution of $\Pext$.

In full generality (\textit{i.e.}, for all domains $\Omega$ and 
all nonlinearities $f$), the optimal regularity for $u^\star$ 
remains still as open problem. For instance, it is unknown if $u^\star$ always lies in 
the energy class $H^1_0(\Omega)$, or if it is always bounded when $n\leq 9$ 
(see open problems~1 and 4 in \cite{BV}). These questions have a positive 
answer in the radial case for all nonlinearities (see Remark~\ref{regradial} below), and also for 
general domains and power or exponential type nonlinearities.
The optimal $L^q$ and $W^{1,p}$ regularity (depending on the dimension) 
in the general case is also still unknown.

Nedev \cite{Nedev} proved in the case of convex nonlinearities that $u^\star
\in L^\infty(\Omega)$ when $n\leq 3$ and $u^\star\in L^q(\Omega)$ for all $q<n/(n-4)$ 
when $n\geq 4$. Note that these regularity results hold for arbitrary 
smooth domains $\Omega$. In another paper, Nedev \cite{Nedev01} also proved that if in addition 
$\Omega$ is strictly convex then $u^\star\in H^1_0(\Omega)$. 
In particular, $u^\star\in L^\frac{2n}{n-2}(\Omega)$. This is the content 
of the unpublished preprint \cite{Nedev01}. In the present paper, we supply with 
detailed proofs (slightly modified) of the result in \cite{Nedev01} ---see
Theorem~\ref{Thm:Nedev}, Remark~\ref{Rmk:Nedev}, and subsection~\ref{subsection4:3} below.

As in Theorem~\ref{Theorem}, it is also possible to prove that 
$u^\star\in W^{1,p}_0(\Omega)$ for all $p<4n/(3n-4)$. However, 
as we said before, the following $W^{1,2}=H^1$ estimate of Nedev \cite{Nedev01}
---proved using a different argument than ours--- is better than 
the one of Theorem~\ref{Theorem}.

\begin{theorem}[Nedev \cite{Nedev01}]\label{Thm:Nedev}
Let $f:[0,+\infty)\longrightarrow\mathbb{R}$ be an increasing positive $C^1$ 
function such that $f(t)/t\rightarrow+\infty$ as $t\rightarrow+\infty$. Assume 
that $\Omega$ is a convex smooth bounded domain of $\mathbb{R}^n$ with $n\geq 2$.
Then $u^\star \in H^1_0(\Omega)$. In particular, $u^\star\in L^\frac{2n}{n-2}(\Omega)$.
\end{theorem}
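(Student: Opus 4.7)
The plan is to establish a uniform $H^1_0(\Omega)$-bound on the minimal solutions $u_\lambda$ for $\lambda<\lambda^\star$. Since $u_\lambda\uparrow u^\star$ a.e., weak compactness in $H^1_0(\Omega)$ identifies the limit with $u^\star$, and the Sobolev embedding $H^1_0\hookrightarrow L^{2n/(n-2)}$ produces the last conclusion.

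For each classical $u_\lambda$, three relations will be combined. Testing the equation with $u_\lambda$ yields the energy identity
$$\int_\Omega |\nabla u_\lambda|^2\, dx \;=\; \lambda \int_\Omega u_\lambda f(u_\lambda)\, dx.$$
Convexity of $\Omega$ enters through Pohozaev's identity: after translating an interior point to the origin, $x\cdot\nu\geq 0$ on $\partial\Omega$, so the boundary term is nonnegative and (for $n\geq 3$)
$$\tfrac{n-2}{2}\int_\Omega |\nabla u_\lambda|^2\, dx \;\leq\; n\lambda \int_\Omega F(u_\lambda)\, dx, \qquad F(t)=\int_0^t f(s)\,ds.$$
Finally, testing the semi-stability inequality \eqref{semi-stab1} against $\xi=\phi(u_\lambda)$ for $\phi\in C^1([0,\infty))$ with $\phi(0)=0$, and substituting the further identity $\int_\Omega \phi'(u_\lambda)^2|\nabla u_\lambda|^2\,dx = \lambda\int_\Omega f(u_\lambda)H(u_\lambda)\,dx$ with $H(t)=\int_0^t\phi'(s)^2\,ds$ (obtained by testing the equation against $H(u_\lambda)$, which vanishes on $\partial\Omega$), gives
$$\int_\Omega f'(u_\lambda)\,\phi(u_\lambda)^2\, dx \;\leq\; \int_\Omega f(u_\lambda)\,H(u_\lambda)\, dx.$$

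The algebraic heart of the proof is to choose $\phi$ so that the last display combines with the Pohozaev and energy identities into an estimate of the form $\int_\Omega u_\lambda f(u_\lambda)\,dx \leq C + \varepsilon\int_\Omega|\nabla u_\lambda|^2\,dx$, which then closes the loop. The main obstacle is precisely this choice: since $f$ is not assumed convex, natural candidates such as $\phi=(f-f(0))^{1/2}$ or $\phi'=\sqrt{f'}$ do not produce a clean closure, and one has to rely only on the monotonicity and superlinearity of $f$ together with the pointwise inequality $F(t)\leq tf(t)$. Once a working $\phi$ is in place, the three displayed relations combine linearly, and the limit $\lambda\uparrow\lambda^\star$ passes by standard monotone convergence; the borderline case $n=2$, in which Pohozaev degenerates, requires a separate small argument but yields the same conclusion.
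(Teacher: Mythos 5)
Your proposal has a genuine and, to your credit, acknowledged gap: the ``working $\phi$'' needed to close the argument is never exhibited, and in fact cannot be exhibited within the route you sketch. Testing semi-stability with $\xi=\phi(u_\lambda)$ and combining with an energy identity is exactly the machinery Nedev uses in \cite{Nedev} to obtain the $L^q$ estimates, $q<n/(n-4)$, and that argument requires $f$ to be \emph{convex}. With $f$ merely increasing and superlinear, the quantity $\int_\Omega F(u_\lambda)\,dx$ appearing on the right of your Pohozaev inequality is not a priori controlled, and the elementary bound $F\le t f(t)$ does not help, because $\int u_\lambda f(u_\lambda)$ equals the very Dirichlet energy you are trying to bound. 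The loop does not close; you have correctly identified this as ``the main obstacle,'' but that obstacle is the theorem.

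The paper's proof (following Nedev's unpublished \cite{Nedev01}) sidesteps semi-stability entirely and uses the convexity of $\Omega$ in a different way than you propose. You drop the boundary term in Pohozaev because $x\cdot\nu\ge 0$; the paper instead \emph{keeps} the boundary term and bounds it. Concretely, rearranging Pohozaev gives
$$\int_\Omega|\nabla u_\lambda|^2\,dx=\frac12\int_{\partial\Omega}|\nabla u_\lambda|^2\,(x\cdot\nu)\,d\sigma+nJ_\lambda(u_\lambda),$$
and the crucial observation---which you never invoke---is that $u_\lambda$, being the \emph{minimal} solution, is the absolute minimizer of $J_\lambda$ on the convex set $\{u\in H^1_0:\,0\le u\le u_\lambda\}$, so $J_\lambda(u_\lambda)\le J_\lambda(0)=0$. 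This yields
$$\int_\Omega|\nabla u_\lambda|^2\,dx\le\frac12\int_{\partial\Omega}|\nabla u_\lambda|^2\,(x\cdot\nu)\,d\sigma.$$
Convexity then enters a second time: the moving-planes boundary estimate of Gidas--Ni--Nirenberg and de Figueiredo--Lions--Nussbaum (Proposition~\ref{Prop4}) gives $\|u_\lambda\|_{L^\infty(\Omega_\varepsilon)}\le\gamma^{-1}\|u^\star\|_{L^1(\Omega)}$ uniformly in $\lambda$, hence $f(u_\lambda)$ is uniformly bounded in $\Omega_\varepsilon$, and standard boundary regularity for the Poisson equation bounds $|\nabla u_\lambda|$ on $\partial\Omega$ uniformly in $\lambda$. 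Thus the boundary integral is bounded, $\|u_\lambda\|_{H^1_0}$ is bounded, and monotone convergence finishes the proof---for all $n\ge 2$, with no separate argument for $n=2$ needed since the factor $(n-2)/2$ never appears.

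In short: replace ``convexity gives a sign for the boundary term, so drop it'' by ``minimality gives $J_\lambda(u_\lambda)\le 0$, so Pohozaev reduces the Dirichlet energy to the boundary term, which convexity (via moving planes and elliptic regularity) controls.'' Semi-stability plays no role in this proof.
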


To prove Theorem~\ref{Thm:Nedev}, a Poho${\rm\check{z}}$aev
identity and the minimality of $u_\lambda$ is used to obtain
$$
\int_\Omega |\nabla u_\lambda|^2\ dx
\leq 
\frac{1}{2}\int_{\partial\Omega}|\nabla u_\lambda|^2 \ (x\cdot\nu(x))\ d\sigma
\quad\textrm{for all }\lambda\in(0,\lambda^\star),
$$
where $\nu$ is the outward unit normal to $\Omega$.
Then, since $\Omega$ is convex, the moving planes method allows to control the right hand side 
of the previous inequality by $\|u^\star\|_{L^1(\Omega)}$. Since $u^\star$ is a weak 
solution of $\Pext$, and hence $u^\star\in L^1(\Omega)$, Theorem~\ref{Thm:Nedev} 
follows. For the sake of completeness we will prove this result simplifying slightly the 
original proof of Nedev. 	

\begin{remark}\label{Rmk:Nedev}
Nedev \cite{Nedev01} pointed out that Theorem~\ref{Thm:Nedev} also holds for certain nonconvex domains. 
More precisely, let $\nu$ be the outward unit normal to $\Omega$ and 
$
E:=\{x\in\partial\Omega: \textrm{ there exists }$ $\varepsilon>0\textrm{ and 
a hyperplane }P \textrm{ such that }P\cap\overline{\Omega}\cap B_\varepsilon(x)
=\{x\}\}.
$
If there exists $a\in\mathbb{R}^n$ and $\alpha<0$ such that $(x-a)\cdot \nu(x)\leq \alpha$ for 
every $x\in\partial\Omega \setminus E$, then the statement of Theorem~\ref{Thm:Nedev} holds in $\Omega$.
Note that this assumption is satisfied by strictly convex domains, annulus, or bean pea 
shaped domains, for example. See also Remark~\ref{Rmk:Nedev2} below.
\end{remark}

\begin{remark}[{\it Regularity in the radial case}\rm]\label{regradial}
In \cite{CC06} it is studied the regularity of semi-stable radially symmetric 
solutions when the domain is a ball. It is proved that every semi-stable solution, 
in particular the extremal solution of $\Plambda$, is bounded if the dimension 
$n\leq 9$. For $n\geq 10$, it is proved that such a solution belongs to $W^{1,q}_0(B_1)$ 
for all $1\leq q<q_1$, where 
$$
q_1:=\frac{2n}{n-2\sqrt{n-1}-2}.
$$
In particular, it belongs to $L^q(B_1)$ for all $1\leq q<q_0$, where
$$
q_0:=\frac{2n}{n-2\sqrt{n-1}-4}.
$$
It can be shown that these regularity results are sharp by taking explicit semi-stable 
solutions corresponding to the exponential and power nonlinearities.

Note that the $L^{\frac{2n}{n-4}}(\Omega)$ estimate obtained in Theorem~\ref{Theorem2} 
differs from the sharp exponent $q_0$ defined above by the term $2\sqrt{n-1}$. 
\end{remark}

\subsection{Plan of the paper}\label{subsection1:4}

The paper is organized as follows. In section~\ref{section2}, we prove 
the geometric-type inequalities stated in Theorem~\ref{Theorem:Sobolev}. 
In section~\ref{section3}, we deal with semi-stable solutions and we prove the 
estimates stated in Theorems~\ref{Theorem2}~and~\ref{Theorem}. Finally, we prove 
Theorem~\ref{Thm:Nedev} due to Nedev \cite{Nedev01} in an unpublished preprint. 

\section{Geometric-type  Sobolev inequalities. Proof of Theorem~\ref{Theorem:Sobolev}}\label{section2}

The main purpose of this section is to establish Theorem~\ref{Theorem:Sobolev}. 
Its proof uses two isoperimetric inequalities. 
The first one is a  consequence of the Fleming-Rishel 
formula \cite{FleRi} and the classical isoperimetric inequality.
If $v\in W^{1,1}_0(\Omega)$, then
\begin{equation}\label{talenti}
A_1 V(t)^{(n-1)/n}\leq P(t)=\frac{d}{dt}\int_{\{|v|\leq t\}}|\nabla
v|dx \qquad\text{for a.e. }t>0,
\end{equation}
where $A_1:=n|B_1|^{1/n}$, $V(t):=|\{x\in\Omega:|v(x)|>t\}|$, and $P(t)$
stands for the perimeter in the sense of De~Giorgi,
\textit{i.e.}, $P(t)$ is the total variation of the characteristic
function of $\{x\in \Omega: |v(x)|>t\}$. A proof of this inequality can be found in
\cite{Talenti}. We also note that the distribution function 
$V(t)$ is differentiable almost everywhere since it is a nonincreasing 
function.

The second isoperimetric inequality that we use is inequality \eqref{isop:mean},
due to Michael and Simon~\cite{MS} 
and to Allard \cite{A} ---see also Theorem~28.4.1 \cite{Burago}.
We apply it to almost all level sets 
of $|v|$, where $v\in C_0^\infty(\overline{\Omega})$. We have
\begin{equation}\label{mean:estimate0}
P(t)^{\frac{n-2}{n-1}} \leq A_2\int_{\{|v|=t\}\cap\{|\nabla v|>0\}} |H_v|
\ d\sigma\quad\textrm{ for a.e. }t>0.
\end{equation}
Here, $H_v$ is the mean curvature of $\{|v|=t\}$ and $A_2$ is a constant 
depending only on the dimension $n\geq 2$. 
Note that, by Sard's theorem, almost every $t\in(0,\|v\|_{L^\infty(\Omega)})$ is a regular value of 
$|v|$. By definition, if $t$ is a regular value of $|v|$, then 
$\left|\nabla v(x)\right|>0$ for all $x\in\Omega$ such that 
$|v(x)|=t$. In particular, if $t$ is a regular value, $S_t
:=\{x\in\Omega: |v(x)|=t\}$ is a $C^{\infty}$ immersed $(n-1)$-dimensional 
compact hypersurface of $\mathbb{R}^n$ without boundary. Hence, 
we can apply inequality \eqref{isop:mean} to $S=S_t$ obtaining 
\eqref{mean:estimate0}. Note here that, since 
$S$ could have a finite number of connected components, 
inequality \eqref{isop:mean} (and \eqref{mean:estimate0}) for 
connected manifolds $S$ leads to the same inequality (with same 
constant) for $S$ with more than one component. 

{From} \eqref{mean:estimate0} and Jensen inequality, we deduce 
\begin{equation}\label{mean:estimate}
P(t)^{\frac{n-(1+r)}{n-1}} \leq A_2^r\int_{\{|v|=t\}\cap\{|\nabla v|>0\}} 
|H_v|^r\ d\sigma\quad\textrm{for all }r\geq 1.
\end{equation}
Since we always have $n\geq1+r$ in Theorem~\ref{Theorem:Sobolev}, we can now use 
the isoperimetric inequality \eqref{talenti} to conclude
\begin{equation}\label{mean:estimate2}
A_1^{\frac{n-(1+r)}{n-1}}V(t)^{\frac{n-(1+r)}{n}} \leq 
A_2^r\int_{\{|v|=t\}\cap\{|\nabla v|>0\}} |H_v|^r\ d\sigma\quad\textrm{for all }r\geq 1.
\end{equation}

This is the key inequality to prove Theorem~\ref{Theorem:Sobolev}. Note that 
in the case $r=0$, inequality \eqref{mean:estimate2} also holds ---it is nothing 
but the classical isoperimetric inequality \eqref{talenti}. We start by proving 
parts $(a)$ and $(c)$.
\begin{proof}[Proof of Theorem~{\rm\ref{Theorem:Sobolev}} $(a)$ and $(c)$]
First, we deal with the case $p=1$ and $r=n-1$. Integrating \eqref{mean:estimate}
from $0$ to $\|v\|_{L^\infty(\Omega)}$ and using the coarea formula, we obtain
$$
\|v\|_{L^\infty(\Omega)} \leq A_2^{n-1}\int_{\Omega\cap\{|\nabla v|>0\}} |H_v|^{n-1}|\nabla v|\ dx,
$$
\textit{i.e.}, \eqref{Sobolev:infinity} with $C_1=A_2^{n-1}$.

Assume now $p>1$ and $r\in [1,n-1]$. Using the coarea 
formula and that almost every $t\in(0,\|v\|_{L^\infty(\Omega)})$ is a regular value 
of $|v|$, we have
$$
-V'(t)=\int_{\{|v|=t\}\cap\{|\nabla v|>0\}}\frac{d\sigma}{|\nabla v|}\qquad\textrm{for a.e. }
t>0.
$$
Hence, by \eqref{mean:estimate2} and H\"older inequality we obtain
$$
A_1^{\frac{n-(1+r)}{n-1}}A_2^{-r}\ V(t)^\frac{n-(1+r)}{n}
\leq (-V'(t))^{1/p'}\left(\int_{\{|v|=t\}\cap\{|\nabla v|>0\}}
\hspace{-0.5cm}|H_v|^{pr}|\nabla v|^{p-1}\ d\sigma\right)^{1/p}
$$
for a.e. $t>0$, where $p'=p/(p-1)$, or equivalently,
$$
1\leq A
\left(V(t)^{-\frac{n-(1+r)}{n}p'}(-V'(t))\right)^{1/p'}\ 
\left(\int_{\{|v|=t\}\cap\{|\nabla v|>0\}}|H_v|^{pr}|\nabla v|^{p-1}\ d\sigma\right)^{1/p}
$$
for a.e. $t>0$ such that $V(t)>0$, where $A=A_1^{-\frac{n-(1+r)}{n-1}}A_2^r$. 
Integrating the previous inequality with respect to $t$ in $(0,s)$ 
and using H\"older inequality, we have 
\begin{equation}\label{eq1:prop1}
s\leq A\left(\int_{V(s)}^{|\Omega|} 
\tau^{-\frac{n-(1+r)}{n}p'}\ d\tau\right)^{1/p'}
\left(\int_{\Omega\cap\{|\nabla v|>0\}}|H_v|^{pr}|\nabla v|^p\ dx\right)^{1/p}
\end{equation}
for a.e. $s\in(0,\|v\|_{L^\infty(\Omega)})$. Let $$\beta:=-\frac{n-(1+r)}{n}p'+1=-\frac{n-p(1+r)}{(p-1)n}.$$

$(a)$ Assume $n<p(1+r)$ and note that $\beta>0$. Therefore, letting 
$s\uparrow\|v\|_{L^\infty(\Omega)}$ in \eqref{eq1:prop1}, we obtain
$$
\|v\|_{L^\infty(\Omega)}\leq \frac{A|\Omega|^{\frac{p(1+r)-n}{np}}}{\beta^{1/p'}}
\left(\int_{\Omega\cap\{|\nabla v|>0\}}|H_v|^{pr}|\nabla v|^p\ dx\right)^{1/p},
$$
proving the remaining case of assertion $(a)$.

$(c)$ Assume $n=p(1+r)$ and $p>1$. From \eqref{eq1:prop1}, we obtain 
$$
s\leq A\left(\int_{V(s)}^{|\Omega|}\frac{d\tau}{\tau}\right)^{1/p'}
\left(\int_{\Omega\cap\{|\nabla v|>0\}}|H_v|^{pr}|\nabla v|^p\ dx\right)^{1/p}
\textrm{ for a.e. }s\in(0,\|v\|_{L^\infty(\Omega)}),
$$
and therefore, 
\begin{equation}\label{quasilinear:b1}
V(s)\leq |\Omega|\exp\left\{-\left(\frac{s}{A I_p}\right)^{p'}\right\}\qquad 
\textrm{ for a.e. }s\in(0,\|v\|_{L^\infty(\Omega)}),
\end{equation}
where $I_p:=\left(\int_{\Omega\cap\{|\nabla v|>0\}}|H_v|^{pr}|\nabla v|^p\ dx\right)^{1/p}$. 
Let $k$ be any positive integer. Using \eqref{quasilinear:b1} we obtain
$$
\begin{array}{lll}
\displaystyle\int_\Omega|v|^{kp'}\ dx&=&\displaystyle kp'\int_0^\infty s^{kp'-1}V(s)\ ds\\
&\leq&\displaystyle kp'|\Omega|\int_0^\infty s^{kp'-1}e^{-\left(\frac{s}{AI_p}\right)^{p'}}\ ds\\
&=&\displaystyle k|\Omega|(AI_p)^{kp'}\int_0^\infty \tau^{k-1}e^{-\tau}\ d\tau\vspace{0.2cm}\\
&=&\displaystyle |\Omega|(AI_p)^{kp'}k!.
\end{array}
$$
Let $C_3>A$ (remember that here $A$ depends only on $n$ and $p$ since 
$r=(n-p)/p$) 
be any positive constant. Then, the previous inequality leads to
$$
\begin{array}{lll}
\displaystyle \int_\Omega\exp\left\{\left(\frac{|v|}{C_3 I_p}\right)^{p'}\right\}\ dx
&\leq&\displaystyle \sum_{k=0}^\infty\left(\frac{A}{C_3}\right)^{kp'}|\Omega|
=\frac{C_3^{p'}}{C_3^{p'}-A^{p'}}|\Omega|.
\end{array}
$$
This ends the proof of parts (a) and (c) of Theorem~\ref{Theorem:Sobolev}.
\end{proof}

\begin{remark}\label{Remark:Sobolev}
In the previous proof we have obtained the following explicit expressions for
the constants in parts $(a)$ and $(c)$ of Theorem~\ref{Theorem:Sobolev}.
Here, $A_1=n|B_1|^{1/n}$ and $A_2$ denote 
the constants appearing in \eqref{isopctant} and \eqref{isop:mean}, 
respectively, which depend only on $n$.

The constant in the $L^\infty$ estimate of part $(a)$ can be taken to be
$$
C_1=\left(\frac{(p-1)n}{p(1+r)-n}\right)^{1-\frac{1}{p}}A_1^{\frac{1+r-n}{n-1}} A_2^r 
$$
when $p>1$, and $C_1=A_2^{n-1}$ when $p=1$ and $r=n-1$. Trudinger's type 
inequality \eqref{quasilinear:b2} holds for all 
$$
C_3>A_1^{-\frac{n}{(n-1)p'}}A_2^{\frac{n}{p}-1}=:A
$$
and the constant $C_4$ is given by $C_4=C_3^{p'}/(C_3^{p'}-A^{p'})$. 
\end{remark}

\begin{remark}
Assume $p>1$ and $n>p(1+r)$. Let $p_r^\star$ the critical Sobolev exponent defined in 
Theorem~\ref{Theorem:Sobolev} $(b)$. Computing the first integral in 
\eqref{eq1:prop1}, we deduce 
$$
V(s)\leq |\Omega|\left(\frac{p'}{p_r^\star}\left(\frac{|\Omega|^{1/p_r^\star}}{AI_{p}}\right)^{p'} s^{p'}+1\right)^{-p_r^\star/p'}\quad\textrm{ for a.e. }s\in(0,\|v\|_{L^\infty(\Omega)}),
$$
where 
$I_{p}:=\left(\int_{\Omega\cap\{|\nabla v|>0\}} |H_v|^{pr}|\nabla v|^p\ dx\right)^{1/p}$. Noting that
$$
\int_\Omega|v|^q\ dx=q\int_0^\infty s^{q-1}V(s)\ ds,
$$
one obtains that, for some constant $C$ depending only on 
$n$, $p$, $r$, and $q$,
$$
\left(\int_{\Omega}|v|^{q}\ dx\right)^{1/q}
\leq C|\Omega|^{\frac{1}{q}-\frac{1}{p_r^\star}}
\left(\int_{\Omega\cap\{|\nabla v|>0\}} |H_v|^{pr}|\nabla v|^p\ dx\right)^{1/p}
$$
for all $q<p_r^\star$. The constant $C$ may be chosen to be
$$
C=\left(\frac{q}{p'}\right)^\frac{1}{q}\left(\frac{p'}{p_r^\star}\right)^{-\frac{1}{p'}}A
\left(\int_0^\infty \tau^{\frac{q}{p'}-1}(\tau+1)^{-\frac{p_r^\star}{p'}}\ d\tau\right)^{1/q},$$
which is finite if and only if $q<p_r^\star$. However, using this 
argument it is not possible to obtain the inequality with the critical 
Sobolev exponent $q=p^\star_r$. Although we could introduce Schwarz (or decreasing) 
symmetrization 
in order to get the critical exponent $p_r^\star$, we use the following slightly 
different argument.
\end{remark}

Now, we prove Theorem~\ref{Theorem:Sobolev} $(b)$.
\begin{proof}[Proof of Theorem~{\rm\ref{Theorem:Sobolev}} $(b)$]
Assume $n>p(1+r)$ and let $p_r^\star=np/(n-p(1+r))$. 
Integrating \eqref{mean:estimate2} from 0 to $M:=\|v\|_{L^\infty(\Omega)}$, 
we obtain
\begin{equation}\label{mean:estimate3}
A_1^{\frac{n-(1+r)}{n-1}}\int_0^MV(t)^{\frac{n-(1+r)}{n}} \ dt\leq 
A_2^r\int_{\Omega\cap\{|\nabla v|>0\}} |H_v|^r|\nabla v|\ dx\quad\textrm{for all }r\geq 1.
\end{equation}

Let 
$$
W(t):=\left(\int_0^t V(s)^{\frac{n-(1+r)}{n}}\ ds\right)^{1_r^\star}.
$$
Using that $V(t)$ is a nonincreasing function, we easily deduce
$$
1_r^\star \ t^{1_r^\star-1}V(t)\leq W'(t)\quad \textrm{for a.e. }t\in(0,M).
$$
Hence, integrating from 0 to $M$, we get
$$
\int_{\Omega}|v|^{1_r^\star}\ dx
=1_r^\star\int_0^M t^{1_r^\star-1}V(t)\ dt
\leq 
W(M)=\left(\int_0^M V(t)^{\frac{n-(1+r)}{n}}\ dt\right)^{1_r^\star}.
$$
Combining this with \eqref{mean:estimate3} we obtain
\begin{equation}\label{mean:estimate:aux}
A_1^{\frac{n-(1+r)}{n-1}}\left(\int_{\Omega}|v|^{1_r^\star}\ dx\right)^{1/1_r^\star}
\leq A_2^r\int_{\Omega\cap\{|\nabla v|>0\}} |H_v|^r|\nabla v| \ dx,
\end{equation}
\textit{i.e.}, assertion $(b)$ for $p=1$.
 
For $p>1$, we only need to apply inequality \eqref{mean:estimate:aux} 
with $|v|$ replaced by $|v|^\gamma$ and $\gamma=p_r^\star/1_r^\star$
(noting that the level sets of $|v|$ and $|v|^\gamma$ 
are the same, and hence, their mean curvatures coincide)
and use H\"older inequality to conclude \eqref{Sobolev}.
\end{proof}

\begin{remark}\label{Remark:Sobolev2}
Inequality \eqref{Sobolev} in Theorem~\ref{Theorem:Sobolev} $(b)$ holds with the constant
$$
C_2=\frac{n-(1+r)}{n-p(1+r)}\ p\ \Big(n|B_1|^{1/n}\Big)^{-\frac{n-(1+r)}{n-1}}
A_2^r,
$$
where $A_2$ is the constant appearing in \eqref{isop:mean}. 
\end{remark}

\begin{remark}\label{stadium}
Here we show that the inequalities in Theorem~\ref{Theorem:Sobolev} 
(both the Sobolev and the Morrey inequalities) do not hold
when $r\in(0,2p^{-1}-1)$ and $1\leq p<2$. In particular, they do not hold for $r\in(0,1)$
and $p=1$.

We also study the geometric inequalities behind \eqref{mean:estimate} 
and \eqref{mean:estimate2}. That is, we study the inequalities
\begin{equation}\label{ineq:aux1}
|\partial\Omega|^\frac{n-(1+pr)}{n-1}\leq C \int_{\partial\Omega}|H|^{pr}\ d\sigma
\end{equation}
and
\begin{equation}\label{ineq:aux2}
|\Omega|^\frac{n-(1+pr)}{n}\leq C \int_{\partial\Omega}|H|^{pr}\ d\sigma,
\end{equation}
where $H$ is the mean curvature of $\partial\Omega\subset\mathbb{R}^n$. 
We show that for every constant $C=C(n,p,r)$ inequalities \eqref{ineq:aux1}
and \eqref{ineq:aux2} fail, even among convex sets $\Omega\subset\mathbb{R}^n$, 
when $r\in(0,1/p)$ and $p\geq 1$.

\begin{figure}[ht]
\begin{center}
    \includegraphics[totalheight=7cm]{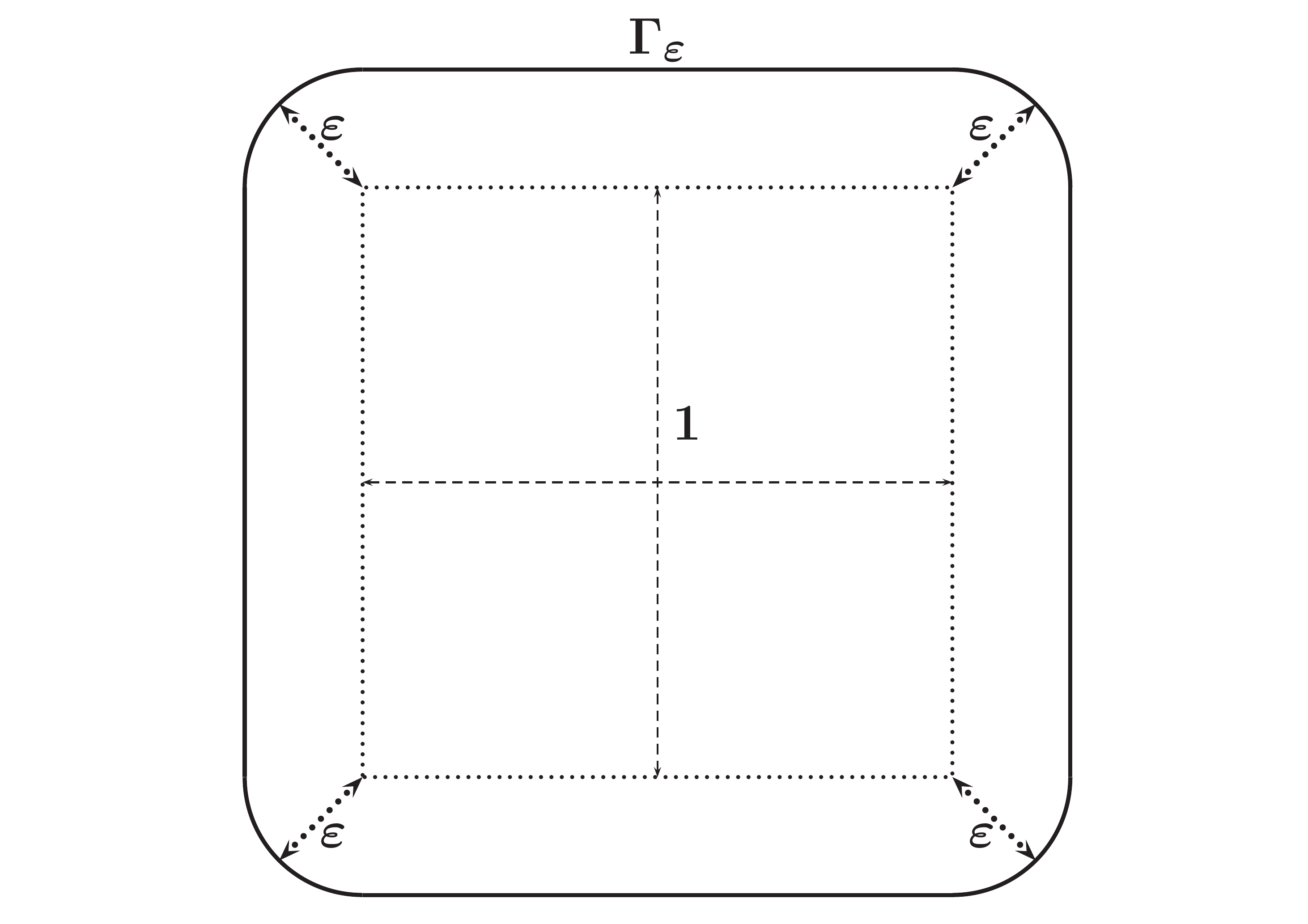}
\end{center}
  \caption{Level sets of $v$.}
  \label{fig:stadium}
\end{figure}

To see all this, let $Q_1=(0,1)^n$ be the open unitary cube of $\mathbb{R}^n$, $n\geq 2$.
Given $\varepsilon\in(0,1/2)$, set $\Gamma_\varepsilon:=
\{x\in\mathbb{R}^n\setminus Q_1:{\rm dist}(x,Q_1)=\varepsilon\}$
and $\Omega_\varepsilon$ to be its bounded interior.
Let $H_{\Gamma_\varepsilon}$ be the mean curvature of $\Gamma_\varepsilon$ and  
$A_\varepsilon:=\{x\in\Gamma_\varepsilon: H_{\Gamma_\varepsilon}(x)
\neq 0\}$. Note that
\begin{equation}\label{ex:aux1}
H_{\Gamma_\varepsilon}\equiv 0\textrm{ on }\Gamma_\varepsilon\setminus A_\varepsilon,
\quad |A_\varepsilon|\leq c_1 \varepsilon,\quad\textrm{ and }\quad 
|H_{\Gamma_\varepsilon}|\leq c_2 \varepsilon^{-1}\textrm{ on }A_\varepsilon,
\end{equation}
where $c_1$ and $c_2$ are constants depending only on $n$.
Therefore, since $r>0$, 
\begin{equation}\label{cube2}
\int_{\Gamma_\varepsilon}|H_{\Gamma_\varepsilon}|^{pr}\ d\sigma
\leq 
c_3\varepsilon^{1-pr},
\end{equation}
where $c_3$ is a constant depending only on $n$, $p$, and $r$.
Since $|\Gamma_\varepsilon|>1$ and $|\Omega_\varepsilon|>1$ 
for all $\varepsilon\in (0,1/2)$, and the right hand side of 
\eqref{cube2} tends to zero, as $\varepsilon$ goes to zero, when 
$r\in (0,1/p)$, we obtain that \eqref{ineq:aux1} and \eqref{ineq:aux2}
do not hold for $r\in(0,1/p)$, as claimed.

Although $\Gamma_\varepsilon$ is not $C^\infty$ (since ${\rm dist}(\cdot,Q_1)$ 
is not a $C^\infty$ function), the same facts hold for $C^\infty$ immersed 
$(n-1)$-dimensional compact hypersurfaces of $\mathbb{R}^n$. Indeed, there exists $\tilde{d}\in C_c^\infty(\mathbb{R}^n)$ 
such that 
$0 \leq \tilde{d} \leq 1$, 
$\tilde{d}\equiv 0$ in $Q_1$, 
$\tilde{d}\equiv 1$ in $\{x\in\mathbb{R}^n:{\rm dist}(x,Q_1)\geq1\}$,
$|\nabla\tilde{d}|\leq 2$, 
and its level sets 
$\tilde{\Gamma}_\varepsilon:=\{x\in\mathbb{R}^n: \tilde{d}(x)=\varepsilon\}$,
$0<\varepsilon<1$, satisfy \eqref{ex:aux1} (and hence \eqref{cube2}). This can be 
seen choosing a hypersurface $\tilde{\Gamma}_1$ coinciding with $\Gamma_1$ in 
its flat parts and smoothing it in vertex, edges, etc. Then we define
$\tilde{\Gamma}_\varepsilon$ for $0<\varepsilon<1$ as an homotethy with respect 
to the vertex, edges, etc., of the cube. In this way, $\tilde{\Gamma}_\varepsilon$ produces 
a foliation of $\{x\in\mathbb{R}^n:0<{\rm dist}(x,Q_1)<1\}$. We finally define 
$\tilde{d}(x)=\varepsilon$ if and only if $x\in\tilde{\Gamma}_\varepsilon$.

Now, we can prove that the inequalities in Theorem~\ref{Theorem:Sobolev} fail
whenever $r\in(0,2p^{-1}-1)$.
Let $v$ be a positive function whose level sets are $\tilde{\Gamma}_\varepsilon$. 
More precisely,  let $\psi:[0,+\infty)\longrightarrow\mathbb{R}$ 
be any decreasing $C^\infty$ function such that $\psi(s)=0$ for $s\geq 1$ and 
$\psi^{i)}(0)=0$ for all $i\geq 1$. 
Given $\varepsilon_0\in(0,1)$, we define $v(x)=\psi(\tilde{d}(x)/\varepsilon_0)$ 
in $\mathbb{R}^n\setminus[0,1]^n$ and $v=\psi(0)$ in $[0,1]^n$. Note that 
$v \in C^\infty(\overline{\Omega})$ where $\Omega$
is the bounded interior of $\tilde{\Gamma}_{\varepsilon_0}$.

Using the coarea formula, $|\nabla \tilde{d}|\leq 2$, \eqref{cube2}, 
and the change of variables $v=\psi(\tilde{d}/\varepsilon_0)=t=\psi(s)$,
it is easy to check that
$$
\begin{array}{lll}
\displaystyle \int_{\Omega\cap\{|\nabla v|>0\}}|H_v|^{pr}|\nabla v|^p\ dx
&=& \displaystyle 
\int_0^{\psi(0)} \int_{\{v=t\}\cap \{|\nabla v|>0\}}|H_v|^{pr}|\nabla v|^{p-1}\ d\sigma\ dt
\\
&\leq&
\displaystyle 
C\int_0^1 |\psi'(s)|^p\varepsilon_0^{-(p-1)}
\int_{\tilde{\Gamma}_{\varepsilon_0s}} |H_{\varepsilon_0 s}|^{pr}\ d\sigma\ ds\\
&\leq&
\displaystyle 
\frac{C}{\varepsilon_0^{p-1}}\varepsilon_0^{1-pr}
\int_0^1 
|\psi'(s)|^p\ s^{1-pr}\ ds,
\end{array}
$$
where $C$ is a constant depending only on $n$, $p$, and $r$.
Note that the right hand side of this inequality tends to zero 
as $\varepsilon_0$ goes to $0$ if $r<2p^{-1}-1$. On the other hand, it is clear 
that, for any $q\geq 1$,  
$$
\|v\|_{L^q(\Omega)}\geq \left(\int_{Q_1}|v|^q\ dx\right)^{1/q}
=\psi(0)=\|v\|_{L^\infty(\Omega)}>0.
$$ 
Therefore, a necessary condition in order that Theorem~\ref{Theorem:Sobolev} 
holds (in the range $r>0$) is $r\geq 2p^{-1}-1$. In particular, if $p=1$ the necessary 
condition is that $r\geq1$.
\end{remark}

\begin{remark}\label{Rmk:perimeter}
We derive two more inequalities involving the perimeter $P(t)$
of the level sets. On the one hand, using \eqref{mean:estimate}, integrating 
with respect to $t$ in $(0,\|v\|_{L^\infty(\Omega)})$, and using the coarea formula, 
we obtain 
$$
\|v\|_{L^\infty(\Omega)}\leq A_2^r\int_{\Omega\cap\{|\nabla v|>0\}} 
P(v)^{\frac{1+r-n}{n-1}}|H_v|^r|\nabla v| \ dx\quad
\textrm{for all } n\geq 2,\ r\geq 1.
$$

On the other hand, note that the total variation of $v$ may 
be written as 
$$
\int_\Omega|\nabla v|\ dx=\int_0^{\|v\|_{L^\infty(\Omega)}} P(t)\ dt,
$$
and that by \eqref{mean:estimate} we have
$$
1\leq A_2^{n-1}P(t)^\frac{1+r-n}{r}\left(\int_{\{|v|=t\}\cap\{|\nabla v|>0\}}
|H_v|^r\ d\sigma\right)^\frac{n-1}{r}.
$$
In the case $2\leq n<1+r$ (which is not considered in Theorem~\ref{Theorem:Sobolev}), 
integrating the previous inequality with respect to $t$ in $(0,\|v\|_{L^\infty(\Omega)})$ and using 
H\"older inequality, we obtain 
$$
\|v\|_{L^\infty(\Omega)}
\leq
A_2^{n-1}
\left(\int_\Omega|\nabla v|\ dx\right)^{\frac{1+r-n}{r}}
\left(\int_{\Omega\cap\{|\nabla v|>0\}}|H_v|^r|\nabla v|\ dx\right)^{\frac{n-1}{r}}.
$$
\end{remark}
\section{Semi-stable solutions. Proof of Theorems~\ref{Theorem}, 
\ref{Theorem2}, and \ref{Thm:Nedev}}\label{section3}
This section deals with semi-stable solutions. We apply Theorem~\ref{Theorem:Sobolev} 
to prove Theorems~\ref{Theorem} and \ref{Theorem2}. Finally we prove Theorem~\ref{Thm:Nedev}
using a Poho${\rm\check{z}}$aev identity and the fact that the extremal solution $u^\star$ 
is the increasing limit in $L^1$ of minimal classical solutions. 

To obtain the $L^\frac{2n}{n-4}$ estimate of Theorems~\ref{Theorem} and \ref{Theorem2} 
we use the semi-stability 
condition \eqref{semi-stab1} with test function $\xi=|\nabla u|\eta$, 
where $u$ is a smooth semi-stable solution of \eqref{problem} and $\eta$ vanishes 
on $\partial\Omega$ and is still arbitrary. With this choice one has
\begin{eqnarray}
\displaystyle \int_{\Omega\cap\left\{\left|\nabla u\right|>0\right\}} 
|B_u|^2|\nabla u|^2\eta^2\ dx
&\leq&\displaystyle \int_{\Omega\cap\left\{\left|\nabla u\right|>0\right\}} 
\left( |\nabla_T |\nabla u||^2 +|B_u|^2|\nabla u|^2\right)\eta^2\ dx \nonumber
\\
&\leq& \displaystyle \int_\Omega |\nabla u|^2 |\nabla \eta|^2 \ dx, \label{neweqn}
\end{eqnarray}
for every Lipschitz function $\eta$ in $\overline\Omega$ with 
$\eta|_{\partial\Omega}\equiv0$ (see for instance Proposition 2.2 of \cite{Cabre09} 
and references therein). 
Here, $\nabla_T$ denotes the 
tangential gradient along a level set of $|u|$ and 
$$
|B_u(x)|^2=\sum_{i=1}^{n-1} \kappa_i^2(x),
$$
where $\kappa_i(x)$ are the principal curvatures of the level set of $|u|$ passing
through $x$, for a given $x\in\Omega\cap\left\{\left|\nabla u\right|>0\right\}$.
Now, noting that $(n-1)H_u^2\leq |B_u|^2$, we deduce inequality \eqref{semi1} 
from \eqref{neweqn}:
\begin{equation}\label{semi1:new}
(n-1)\int_{\Omega\cap\{|\nabla u|>0\}} H_u^2|\nabla u|^2\eta^2\ dx
\leq \int_\Omega |\nabla u|^2|\nabla\eta|^2 \ dx.
\end{equation}

\subsection{Proof of Theorem~\ref{Theorem}}\label{subsection4:1}
The $L^\frac{2n}{n-4}$ estimate will follow from \eqref{semi1:new}. Instead, the $W^{1,p}$ 
estimates of Theorem~\ref{Theorem} will use the following result. It holds for solutions 
of the linear problem
\begin{equation}\label{linear}
\left\{
\begin{array}{rcll}
-\Delta u&=&h(x)&\textrm{in }\Omega,\\
u&=&0&\textrm{on }\partial \Omega.
\end{array}
\right.
\end{equation}

\begin{proposition}\label{Thm:bootstrap}
Assume $n\geq 3$ and $h\in L^1(\Omega)$. If $u\in 
W^{1,1}_0(\Omega)\cap L^q(\Omega)$ is a solution $($in 
the distributional sense$)$ of \eqref{linear} for some $q\geq n/(n-2)$, 
then 
%
$$
\int_\Omega |\nabla u|^p\ dx\leq p|\Omega|
+
\left(\frac{p_q}{p}-1\right)^{-1}\big\{\|u\|_{L^q(\Omega)}^q+\|h\|_{L^1(\Omega)}\big\}
$$
%
for all $p< p_q:=\frac{2q}{q+1}$.
\end{proposition}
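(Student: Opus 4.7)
The plan is to deduce the $L^p$ gradient estimate from a weak-type (Marcinkiewicz) bound on $|\nabla u|$, obtained by combining the classical truncation technique of B\'enilan \textit{et al.} with Chebyshev applied to the hypothesis $u\in L^q(\Omega)$. The argument has three clean steps, plus one technical obstacle.

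\medskip

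First, I would take the truncation $T_k(u):=\max(-k,\min(k,u))$ as a test function in \eqref{linear}. Since $\nabla T_k(u)=\chi_{\{|u|<k\}}\nabla u$ by Stampacchia's lemma, this produces formally
\[
\int_{\{|u|\leq k\}}|\nabla u|^2\,dx=\int_\Omega h\,T_k(u)\,dx\leq k\,\|h\|_{L^1(\Omega)},\qquad k>0.
\]
Second, for arbitrary $t>0$ and $k>0$, I combine this with Chebyshev and a union bound:
\[
|\{|\nabla u|>t\}|\leq |\{|u|>k\}|+\bigl|\{|\nabla u|>t\}\cap\{|u|\leq k\}\bigr|\leq \frac{\|u\|_{L^q}^q}{k^q}+\frac{k\,\|h\|_{L^1}}{t^2}.
\]
Choosing $k=t^{2/(q+1)}$ to balance the two terms yields the weak-$L^{p_q}$ estimate
\[
|\{|\nabla u|>t\}|\leq t^{-p_q}\bigl(\|u\|_{L^q(\Omega)}^q+\|h\|_{L^1(\Omega)}\bigr)\qquad\text{for all }t>0.
\]
(The hypothesis $q\geq n/(n-2)$ ensures $p_q\geq n/(n-1)$, which is the natural Sobolev threshold for $W^{1,1}$-solutions with $L^1$ data, making this bound genuinely informative.)

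\medskip

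Third, I upgrade the weak bound to the stated $L^p$ estimate via the layer-cake formula:
\[
\int_\Omega|\nabla u|^p\,dx=p\int_0^\infty t^{p-1}|\{|\nabla u|>t\}|\,dt.
\]
Splitting at $t=1$, the integral on $(0,1)$ is controlled using only the trivial bound $|\{|\nabla u|>t\}|\leq|\Omega|$, giving a volume contribution dominated by $p\,|\Omega|$; the integral on $(1,\infty)$ is controlled by the weak-$L^{p_q}$ bound and converges since $p<p_q$, producing a factor $\frac{p}{p_q-p}=(p_q/p-1)^{-1}$ times $\|u\|_{L^q}^q+\|h\|_{L^1}$. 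Summing the two contributions gives the claimed inequality.

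\medskip

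The main obstacle is the rigorous justification of Step~1. Since $u$ is only assumed to lie in $W^{1,1}_0(\Omega)$ (not \textit{a priori} in $H^1_0$) and $h$ is only in $L^1(\Omega)$, inserting $T_k(u)$ directly into the distributional formulation is not legitimate, because the identity would require $\nabla u\in L^2$ on $\{|u|\leq k\}$ --- which is precisely what the step aims to establish. The standard workaround is to approximate $h$ by smoother data $h_\varepsilon\in L^\infty(\Omega)$ with $h_\varepsilon\to h$ in $L^1$, solve the corresponding Dirichlet problem to obtain classical solutions $u_\varepsilon\in H^1_0(\Omega)$ for which $T_k(u_\varepsilon)$ is admissible, establish all of the above estimates uniformly in $\varepsilon$, and pass to the limit using the stability theory of distributional (equivalently, renormalized/entropy) solutions with $L^1$ data as developed in \cite{BBGGPV95}.
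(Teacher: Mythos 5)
Your proof is correct and follows essentially the same route as the paper: truncation with $T_k(u)$ to get $\int_{\{|u|\le k\}}|\nabla u|^2\,dx\le k\|h\|_{L^1}$, a union bound plus Chebyshev balanced at $k=t^{2/(q+1)}$ to obtain the weak-$L^{p_q}$ bound on $|\nabla u|$, and then the layer-cake formula split at $t=1$. Your remark about justifying $T_k(u)$ as a test function is a fair technical point that the paper glosses over (and indeed avoids in its actual applications, where $u$ is smooth); your proposed approximation via the $L^1$-data framework of B\'enilan \textit{et al.}\ is the standard fix.
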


\begin{remark}
Assume $h\in L^1(\Omega)$. By standard estimates for elliptic equations, 
there exists a constant $C$ depending only on $n$, $p$, and $|\Omega|$, such that
$$
\int_\Omega|\nabla u|^p\ dx\leq C\|h\|_{L^1(\Omega)}\quad\textrm{for all }p<\frac{n}{n-1}
$$
for every solution $u$ of \eqref{linear}.
The critical exponent $p=n/(n-1)$ can not be reached. In Proposition~\ref{Thm:bootstrap},
under the additional assumption 
$u\in L^q(\Omega)$ for some $q\geq n/(n-2)$, we improve the previous estimate; 
note that $p_q:=2q/(q+1)\geq n/(n-1)$. 

The exponent $p_q$ in Proposition \ref{Thm:bootstrap} is the same as the one in the 
Gagliardo-Nirenberg interpolation inequality
$$
\|\nabla u\|_{L^{p_q}(\Omega)}\leq C\|u\|_{W^{2,1}(\Omega)}^{1/2}\|u\|_{L^q(\Omega)}^{1/2}.
$$
Note that in Proposition~\ref{Thm:bootstrap} we assume 
$-\Delta u=h\in L^1(\Omega)$ and $u\in L^q(\Omega)$.
\end{remark}

The proof of Proposition~\ref{Thm:bootstrap} is based in a technique introduced 
by B\'enilan \textit{et al.} \cite{BBGGPV95} to obtain gradient estimates for 
the entropy solution of problem \eqref{linear} with the Laplacian replaced 
by the $p$-Laplacian.

\begin{proof}[Proof of Proposition~{\rm\ref{Thm:bootstrap}}]
Multiplying \eqref{linear} by 
$T_s u=\max\{-s,$ $\min\{s,u\}\}$ we obtain 
$$
\int_{\{|u|\leq s\}}|\nabla u|^2\ dx=\int_\Omega h(x)T_su\ dx\leq s\|h\|_{L^1(\Omega)}.
$$
{F}rom this, we deduce
$$
\begin{array}{lll}
\displaystyle s^{q}|\{|\nabla u|>s^{(q+1)/2}\}|&\hspace{-0.3cm}\leq&\displaystyle\hspace{-0.3cm} 
s^{q}\int_{\{|\nabla u|>s^{(q+1)/2}\}\cap\{|u|\leq s\}}\left(\frac{|\nabla u|}{s^{(q+1)/2}}\right)^2 dx
+s^{q}\int_{\{|u|>s\}} dx\\
\\
&\hspace{-0.3cm}\leq &\displaystyle \hspace{-0.3cm}\|h\|_{L^1(\Omega)} +s^{q}V(s),\quad\textrm{for a.e. }s>0.
\end{array}
$$
Recall that $V(s)=|\{x\in\Omega:|u(x)|>s\}|$. Letting $t=s^{(q+1)/2}$, we have 
\begin{equation}\label{lalarito}
t^{2q/(q+1)}|\{|\nabla u|>t\}|\leq 
\sup_{\sigma>0}\Big\{\sigma^{q}V(\sigma)\Big\}+\|h\|_{L^1(\Omega)}\ ,
\quad\textrm{for a.e. }t>0.
\end{equation}

Moreover, since
$$
\sigma^{q}V(\sigma)\leq \sigma^q\int_{\{|u|>\sigma\}}\frac{|u|^q}{\sigma^q}\ dx
\leq\int_\Omega|u|^q\ dx=\|u\|_{L^q(\Omega)}^q, \quad\textrm{for a.e. }\sigma>0,
$$
we have $\sup_{\sigma>0}\Big\{\sigma^{q}V(\sigma)\Big\}\leq \|u\|_{L^q(\Omega)}^q$. Therefore, from 
\eqref{lalarito} we deduce
$$
\begin{array}{lll}
\displaystyle \int_\Omega |\nabla u|^p\ dx
&=& \displaystyle p\int_0^\infty t^{p-1}|\{|\nabla u|>t\}|\ dt\\
&\leq& \displaystyle p|\Omega|+p\int_1^\infty t^{p-1}t^{-\frac{2q}{q+1}}\Big(\|u\|_{L^q(\Omega)}^q
+\|h\|_{L^1(\Omega)}\Big)\ dt,
\end{array}
$$
proving the proposition.
\end{proof}

Using \eqref{semi1:new}, Proposition~\ref{Thm:bootstrap}, 
and applying Theorem~\ref{Theorem:Sobolev} $(b)$ to $|u|-s$ with 
$\Omega$ replaced by $\{x\in\Omega: |u(x)|>s\}$, we prove Theorem~\ref{Theorem}.
\begin{proof}[Proof of Theorem~{\rm\ref{Theorem}}]
Since $g\in C^\infty$, we have that $u\in C^\infty(\overline{\Omega})$. Recall that we 
assume $n\geq 5$. By taking $\eta=T_s u=\max\{-s,\min\{s,u\}\}$ in \eqref{semi1:new}, we obtain
\begin{equation}\label{semi2}
(n-1)\int_{\{|u|>s\}\cap\{|\nabla u|>0\}}H_{u}^2|\nabla u|^2\ dx
\leq \frac{1}{s^2}\int_{\{|u|\leq s\}} |\nabla u|^4 \ dx,
\end{equation}
for all $s>0$. We apply Theorem~\ref{Theorem:Sobolev} $(b)$ to 
$v=u-s\in C^\infty(\overline{\Omega})$ with $p=2$ and $r=1$, 
replacing $\Omega$ by each component of $\{x\in\Omega:u(x)>s\}$ (which is $C^\infty$ 
for a.e. $s$). Using also 
\eqref{semi2} we deduce
$$
\begin{array}{lll}
\displaystyle \left(\int_{\{u>s\}}\Big(u-s\Big)^{\frac{2n}{n-4}}\ dx\right)^{\frac{n-4}{2n}}
&\leq&\displaystyle  C_2\left(\int_{\{u>s\}\cap\{|\nabla u|>0\}} H_u^{2}|\nabla u|^2 
\ dx\right)^{\frac{1}{2}}\\
&\leq&\displaystyle  \frac{C(n)}{s}\left(\int_{\{u\leq s\}} |\nabla u|^4 \ dx\right)^{\frac{1}{2}},
\end{array}
$$
for a.e. $s>0$, where $C(n)$ depends only on $n$. Doing the same argument for $-u-s$ 
in $\{-u>s\}$ we conclude \eqref{Lq:estimate}.

Finally, \eqref{grad:estimate} follows applying Proposition~\ref{Thm:bootstrap} 
with $q=2n/(n-4)$.
\end{proof}

\subsection{Proof of Theorem~\ref{Theorem2}}\label{subsection4:2}
To prove Theorem~\ref{Theorem2} we need to control the right hand side of 
\eqref{Lq:estimate}. We accomplish this 
using a boundary regularity result for positive solutions in convex domains. 
More precisely, we use the following result from \cite{GNN, FLN}
(see also \cite{Dupaigne} for its proof).

\begin{proposition}[\cite{GNN, FLN}]\label{Prop4}
Let $f$ be any locally Lipschitz function and let $\Omega$ be a smooth bounded 
domain of $\mathbb{R}^{n}$. Let $u$ be any 
positive classical solution of \eqref{problem}.

If $\Omega$ is convex, then there exist positive constants $\varepsilon$ 
and $\gamma$ depending only on the domain $\Omega$ such that
for every $x\in\Omega$ with $\text{\rm dist}(x,\partial\Omega)<\varepsilon$,
there exists a set $I_x\subset\Omega$ with the following properties: 
$$
|I_x|\geq\gamma \qquad\text{and}\qquad
u(x) \leq u(y) \ \text{ for all }  y\in I_x.
$$
As a consequence,
$$
\Vert u\Vert_{L^\infty(\Omega_\varepsilon)}\leq \frac{1}{\gamma} \Vert u\Vert_{L^1 (\Omega)},
\ \ \text{ where }  \Omega_\varepsilon=\{x\in\Omega\, :\, \text{\rm dist}(x,\partial\Omega)
<\varepsilon\}.
$$
\end{proposition}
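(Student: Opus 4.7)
The plan is to derive this via the moving planes method of Gidas--Ni--Nirenberg, exploiting the convexity of $\Omega$ to perform reflections that stay inside $\Omega$. For each boundary point $x_0 \in \partial\Omega$, let $\nu(x_0)$ denote the outward unit normal. I would work with a fixed direction $e$ close to $-\nu(x_0)$ and, for $\lambda$ in a suitable range, consider the cap $\Sigma_\lambda := \{y \in \Omega : y\cdot e > \lambda\}$ together with its reflection $\Sigma_\lambda^\sharp$ across $T_\lambda := \{y\cdot e = \lambda\}$. By convexity of $\Omega$, for $\lambda$ close enough to $\max_\Omega y\cdot e$, one has $\Sigma_\lambda^\sharp \subset \Omega$.

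The first main step is to prove a reflection inequality: letting $y \mapsto y^\sharp$ denote reflection through $T_\lambda$ and $w_\lambda(y) := u(y^\sharp) - u(y)$ for $y \in \Sigma_\lambda$, one has
\[
-\Delta w_\lambda \;=\; f(u(y^\sharp)) - f(u(y)) \;=\; c_\lambda(y)\, w_\lambda(y)
\]
with $c_\lambda$ bounded (using the local Lipschitz bound on $f$ applied to the uniform $L^\infty$ bound on $u$ in the compact set $\overline{\Sigma_\lambda^\sharp}$ coming from Hopf's lemma and classical regularity). On $T_\lambda \cap \overline{\Omega}$ we have $w_\lambda = 0$, and on the remainder of $\partial\Sigma_\lambda$ we have $w_\lambda \geq 0$ because $y \in \partial\Omega$ gives $u(y)=0$ while $u(y^\sharp)\geq 0$. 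For caps of small measure the maximum principle (in the narrow-domain form, or equivalently via Kato's inequality and $L^n$ smallness of $c_\lambda^+$) yields $w_\lambda \geq 0$ in $\Sigma_\lambda$, hence $u(y) \leq u(y^\sharp)$.

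The second step is to build $I_x$ and make the constants uniform. Given $x \in \Omega$ with ${\rm dist}(x,\partial\Omega) < \varepsilon$, pick the nearest boundary point $x_0$ and a direction $e$ near $-\nu(x_0)$, and choose $\lambda$ so that $x \in \Sigma_\lambda$ and the reflected cap $\Sigma_\lambda^\sharp$ still lies inside $\Omega$. Varying $e$ in a small spherical cap around $-\nu(x_0)$, the reflected points $x^\sharp$ sweep out a set $I_x \subset \Omega$, and at each such point the first step gives $u(x) \leq u(x^\sharp)$. A compactness argument on $\partial\Omega$, together with the smoothness of $\partial\Omega$ and the uniform lower bound on the size of admissible caps coming from convexity, produces $\varepsilon > 0$ and $\gamma > 0$ depending only on $\Omega$ such that $|I_x| \geq \gamma$ for every such $x$.

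The $L^\infty$ estimate is then an immediate consequence: since $u(x) \leq u(y)$ for a.e.\ $y \in I_x$,
\[
\gamma\, u(x) \;\leq\; \int_{I_x} u(y)\,dy \;\leq\; \|u\|_{L^1(\Omega)}.
\]
The principal obstacle is the uniform step: one must ensure that the cap size, the admissible range of directions $e$, and the measure $|I_x|$ can all be bounded below by a single constant $\gamma$ depending only on $\Omega$ and not on $x$, $u$, or $f$. This relies on quantitative versions of the moving planes argument (uniform bounds on how far the planes can be moved before a reflected point touches $\partial\Omega$) together with the smoothness of $\partial\Omega$, and is essentially the content of \cite{GNN, FLN}; the treatment in \cite{Dupaigne} makes the constants explicit.
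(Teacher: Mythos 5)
The paper does not actually prove this proposition; it quotes it as a known result from \cite{GNN,FLN} and refers to \cite{Dupaigne} for the argument. Your sketch correctly reproduces the moving planes proof that those references use: reflecting caps near $\partial\Omega$, the comparison function $w_\lambda = u\circ(\cdot)^\sharp - u$ together with the maximum principle, and the construction of the measuring set $I_x$ by varying the direction $e$ and the level $\lambda$. One point to be more careful about: the narrow-domain (or small-measure) maximum principle has a smallness threshold that depends on $\|c_\lambda\|_{L^\infty}$, and hence on $\sup u$ and the local Lipschitz constant of $f$ on the relevant range, so by itself it does not produce an $\varepsilon$ depending only on $\Omega$. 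The asserted uniformity comes from the sliding continuation step of the moving planes method, which pushes the plane beyond this initial range to a purely geometric stopping threshold --- first internal tangency of $\Sigma_\lambda^\sharp$ to $\partial\Omega$, or orthogonality of $T_\lambda$ to $\partial\Omega$ --- depending only on $\Omega$ and not on $u$ or $f$. You correctly flag this uniformity as the crux and defer it to the references, which is consistent with how the paper itself treats the proposition.
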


We recall (see \cite{Dupaigne}) that it is well known that the extremal solution $u^\star$ 
belongs to $L^1(\Omega)$ and it is a weak solution of $\Pext$ in the 
following sense.
\begin{definition}\label{Defn:weak_sol}
Let $\delta(x):={\rm dist}(x,\partial\Omega)$.
We say that $u\in L^1(\Omega)$ is a \textit{weak solution} of \eqref{problem} if 
$g(u)\delta\in L^1(\Omega)$ and 
$$
\int_\Omega u(-\Delta \varphi)\ dx=\int_\Omega g(u)\varphi\ dx\qquad
\textrm{for all }\varphi\in C^2(\overline{\Omega})\textrm{ with }\varphi|_{\partial\Omega}=0.
$$
\end{definition}

Since $u^\star\in L^1(\Omega)$, from Proposition~\ref{Prop4} we deduce next 
that $u^\star$ is bounded (and smooth) in a neighborhood of the boundary 
if the domain is convex. 
This and Theorem \ref{Theorem} give Theorem~\ref{Theorem2}.

\begin{proof}[Proof of Theorem~{\rm\ref{Theorem2}}]
Assume first that $f\in C^\infty(\mathbb{R})$. Let 
$u_\lambda\in C^\infty(\overline{\Omega})$ be the 
minimal solution of $\Plambda$ for $\lambda\in
(0,\lambda^\star)$.
By Proposition~\ref{Prop4}, and noting that the extremal 
solution $u^\star$ is the increasing limit of $\{u_\lambda\}$, 
there exist constants $\varepsilon$ and $\gamma$ independent of 
$\lambda$ such that
\begin{equation}\label{newnew}
\Vert u_\lambda\Vert_{L^\infty(\Omega_\varepsilon)}
\leq 
\frac{1}{\gamma} \Vert u^\star\Vert_{L^1 (\Omega)}
\qquad\textrm{for all }\lambda<\lambda^\star,
\end{equation}
where  
$$
\Omega_\varepsilon:=\{x\in\Omega : \text{\rm dist}(x,\partial\Omega)<\varepsilon\}.
$$
By taking $\varepsilon$ smaller if necessary, we may assume that
$\Omega_\delta$ is $C^\infty$ for every $0<\delta\leq\varepsilon$.

We can conclude the proof in two ways. First, 
we proceed as in the proof of Proposition 3.1 in \cite{Cabre09}. For this,
note that if $\lambda^\star/2<\lambda<\lambda^\star$, then 
$$ 
u_\lambda\geq
u_{\lambda^\star/2} > c\, \text{\rm dist}(\cdot,\partial\Omega)
$$
for some positive constant $c$ independent of $\lambda\in (\lambda^\star/2,\lambda^\star)$.
Therefore, letting
$$
\tilde{s} :=c\frac{\varepsilon}{2},
$$
we have
$$
\left\{x\in\Omega: u_\lambda(x)\leq \tilde{s}\right\}\subset \Omega_{\varepsilon/2}.
$$
We now use \eqref{Lq:estimate} in Theorem~\ref{Theorem} with $s$ replaced by $\tilde{s}$.
It suffices to bound $\Vert u_\lambda\Vert_{W^{1,4}(\Omega_{\varepsilon/2})}$.
But $u_\lambda$ is a solution of the linear equation $-\Delta u_\lambda= 
h(x):= \lambda f(u_\lambda(x))$ in 
$\Omega_\varepsilon$ and
$u_\lambda=0$ on $\partial\Omega$ (which is one part of $\partial\Omega_\varepsilon$).
On the other hand, $\partial\Omega \cup \Omega_{\varepsilon/2}$ 
has compact closure 
contained in $\partial\Omega \cup \Omega_\varepsilon$, and both sets
are $C^\infty$.
By \eqref{newnew}, both $u_\lambda$ and the right
hand side $h$ are bounded independently of~$\lambda$.
Hence, by interior and boundary estimates for the linear Poisson equation,
we deduce a bound for $\Vert u_\lambda\Vert_{W^{1,4}(\Omega_{\varepsilon/2})}$ independent
of $\lambda$. Letting $\lambda$ tend to $\lambda^\star$, 
we obtain $u^\star\in L^{\frac{2n}{n-4}}(\Omega)$.

Our second proof is perhaps more direct; it does not use regularity for
the linear problem. Here we choose a regular value $s$ of $u$ 
(and thus $\{x\in\Omega:u_\lambda(x)>s\}$ is smooth) such that
$$
\frac{1}{\gamma} \Vert u^\star\Vert_{L^1 (\Omega)}
\leq s \leq \frac{2}{\gamma} \Vert u^\star\Vert_{L^1 (\Omega)}.
$$ 
By \eqref{newnew} we have
\begin{equation}\label{inclussion}
\Omega_\varepsilon\subset\left\{x\in\Omega: u_\lambda(x)\leq s\right\}.
\end{equation}
Now, we use
$$
\eta(x)=\left\{
\begin{array}{cll}
{\rm dist}(x,\partial\Omega)&\textrm{in}&\Omega_\varepsilon=\{{\rm dist}(x,\partial\Omega)<\varepsilon\},\\
\varepsilon&\textrm{in}&\{{\rm dist}(x,\partial\Omega)\geq\varepsilon\}
\end{array}
\right.
$$
as a test function in \eqref{semi1:new}. Using \eqref{inclussion} we obtain
$$
(n-1)\varepsilon^2\int_{\{u_\lambda>s\}\cap\{|\nabla u_\lambda|>0\}} H_{u_\lambda}^2|\nabla u_\lambda|^2\ dx
\leq \int_{\{u_\lambda<s\}} |\nabla u_\lambda|^2\ dx.
$$
Multiplying equation $\Plambda$ by $T_s u_\lambda=\min\{s,u_\lambda\}$ 
we have
$$
\int_{\{u_\lambda<s\}} |\nabla u_\lambda|^2\ dx=\lambda\int_\Omega f(u_\lambda)T_su_\lambda\ dx
\leq \lambda^\star s \|f(u^\star)\|_{L^1(\Omega)}.
$$
Note that $\|f(u^\star)\|_{L^1(\Omega)}<\infty$ since it is well known that 
$f(u^\star)\ {\rm dist}(\cdot,\partial\Omega)\in L^1(\Omega)$ in 
general smooth domains and if in addition $\Omega$ is convex then
$u^\star$, and thus 
$f(u^\star)$, are bounded in $\Omega_\varepsilon$ by \eqref{newnew}.

Therefore, using Theorem~\ref{Theorem:Sobolev} $(b)$ applied 
to $v=u_\lambda-s$, with $p=2$ and $r=1$, and replacing $\Omega$ by each component of 
$\{x\in\Omega:u_\lambda (x) >s\}$ (which is smooth), we deduce
$$
\left(\int_{\{u_\lambda>s\}}\Big(u_\lambda-s\Big)^{\frac{2n}{n-4}}\ dx\right)^{\frac{n-4}{2n}}
\leq \frac{C_2}{\varepsilon\sqrt{n-1}}\left(\lambda^\star s \|f(u^\star)\|_{L^1(\Omega)}\right)^{\frac{1}{2}}
$$
for all $\lambda\in(0,\lambda^\star)$. In particular, 
letting $\lambda$ tend to $\lambda^\star$, 
we obtain $u^\star\in L^{\frac{2n}{n-4}}(\Omega)$. 

In case that $f$ is only $C^1(\mathbb{R})$ then one can make an easy 
approximation argument to obtain the same result (see proof of Theorem 1.2 in 
\cite{Cabre09} for the details).
\end{proof}

\begin{remark}\label{rem1:7}
As a consequence of Theorem~\ref{Theorem}, if $u\in L^1(\Omega)$ 
is a weak solution of \eqref{problem} (in the sense of Definition \ref{Defn:weak_sol}) 
which is bounded in a neighborhood of $\partial\Omega$ and which is the $L^1(\Omega)$ 
limit of a sequence of classical semi-stable solutions of \eqref{problem}, then 
$u\in L^{2n/(n-4)}(\Omega)$ and $u\in W^{1,p}_0(\Omega)$ for all $p<4n/(3n-4)$. In 
particular, 
$$
u\in L^2(\Omega)\cap W^{1,4/3}_0(\Omega)
$$ 
independently of the dimension $n$. 
For general solutions (not necessarily semi-stable) the best regularity that one 
expects assuming only $g(u)\in L^1(\Omega)$ is $u\in L^q(\Omega)\cap W^{1,p}_0(\Omega)$ 
for all $1\leq q<n/(n-2)$ and $1\leq p<n/(n-1)$. Hence, semi-stable solutions enjoy 
more regularity than general solutions.
\end{remark}

\subsection{Proof of Theorem~\ref{Thm:Nedev}}\label{subsection4:3}
In an unpublished paper, Nedev \cite{Nedev01} proved that the extremal solution 
$u^\star$ lies in the energy class $H^1_0$, independently of the dimension,  
when $\Omega$ is strictly convex. For this, he used a Poho${\rm\check{z}}$aev identity, 
an upper bound independent of $\lambda$ for the energy of the 
minimal solutions $u_\lambda$, and the fact that $u^\star$ is bounded (and hence regular)
in a neighborhood 
of the boundary. Here, for the sake of completeness, we give a proof 
of Nedev's result.

Recall that the energy functional associated to $\Plambda$ is given by 
$$
J_\lambda(u):=\frac{1}{2}\int_\Omega |\nabla u|^2\ dx-\lambda\int_\Omega F(u)\ dx,
\qquad F(u):=\int_0^u f(s)\ ds.
$$
In \cite{Nedev01} an upper bound of $J_\lambda(u_\lambda)$ is proved by using the parabolic 
equation associated to $\Plambda$, $u_t-\Delta u=\lambda f(u)$. This equation was studied 
by Brezis \textit{et al.} \cite{BCMR96}. The proof that we present here uses a different, 
purely elliptic, argument at this point.

\begin{proof}[Proof of Theorem~{\rm\ref{Thm:Nedev}}]
Let $u_\lambda$ be the minimal solution of $\Plambda$ and let $\nu$ be the outward 
unit normal to $\Omega$. 
Multiplying $\Plambda$ by $x\cdot\nabla u_\lambda$ it is standard to obtain 
the following Poho${\rm\check{z}}$aev identity:
\begin{equation}\label{Pohozaev} 
\int_\Omega |\nabla u_\lambda|^2\ dx
=
\frac{1}{2}\int_{\partial\Omega}|\nabla u_\lambda|^2\ x\cdot\nu(x)\ d\sigma+n J_\lambda(u_\lambda)
\end{equation}
for all $\lambda\in(0,\lambda^\star)$. Since the minimal solution $u_\lambda$ is 
the only solution of $\Plambda$ in $\{u\in H^1_0(\Omega):0\leq u\leq u_\lambda\}$, 
it is also the absolute minimizer of $J_\lambda$ in this convex set. Hence, 
we have $J_\lambda(u_\lambda)\leq J_\lambda(0)=0$ for every $\lambda\in(0,\lambda^\star)$. 

Therefore, from \eqref{Pohozaev} one deduces that
\begin{equation}\label{Pohozaev2} 
\int_\Omega |\nabla u_\lambda|^2\ dx
\leq 
\frac{1}{2}\int_{\partial\Omega}|\nabla u_\lambda|^2\ x\cdot\nu(x)\ d\sigma,
\quad\textrm{for all }\lambda\in(0,\lambda^\star).
\end{equation}

Now, since $\Omega$ is convex, there exist positive constants $\varepsilon$ and $\gamma$ depending 
only on the domain $\Omega$ such that \eqref{newnew} holds. As a consequence, 
$\|f(u_\lambda)\|_{L^\infty(\Omega_\varepsilon)}\leq \|f\|_{L^\infty(0,\alpha)}$ for all 
$\lambda\in(0,\lambda^\star)$, 
where $\alpha$ is a constant depending only on $\Omega$ and $\|u^\star\|_{L^1(\Omega)}$ 
---and thus independent of $\lambda$. By \eqref{newnew}, also $u_\lambda$ is bounded 
in $\Omega_\varepsilon$ independently of~$\lambda$.
Hence, using boundary estimates at $\partial\Omega$ for the linear Poisson equation 
$-\Delta u_\lambda=\lambda f(u_\lambda(x))$ in $\Omega_\varepsilon$, we deduce a bound for 
the right hand side of inequality \eqref{Pohozaev2} independent of $\lambda$. Making 
$\lambda$ tend to $\lambda^\star$ we conclude the proof.
\end{proof}

\begin{remark}\label{Rmk:Nedev2}
As mentioned in \cite{Nedev01}, Theorem~\ref{Thm:Nedev} holds for some nonconvex 
domains such as annulus or bean pea shaped domains. Indeed, using 
Poho${\rm\check{z}}$aev identity (obtained multiplying $\Plambda$ by $(x-a)\cdot\nabla u$) 
and the fact that $J_\lambda(u_\lambda)\leq 0$, one obtains 
\begin{equation}\label{Pohozaev3} 
\int_\Omega |\nabla u_\lambda|^2\ dx
\leq 
\frac{1}{2}\int_{\partial\Omega}|\nabla u_\lambda|^2(x-a)\cdot\nu(x)\ d\sigma
\quad\textrm{for all }\lambda\in(0,\lambda^\star).
\end{equation}
Let $E:=\{x\in\partial\Omega:$ there exists $\varepsilon>0$ and 
a hyperplane $P$ such that $P\cap\overline{\Omega}\cap B_\varepsilon(x)
=\{x\}\}$. By using the moving planes method, as in the proof of Proposition
\ref{Prop4}, it can be seen that $u^\star$ is bounded (by a constant independent 
of $\lambda$) and regular in a neighborhood in $\Omega$ of any compact subset 
of $E$. In particular, if there exists $a\in \mathbb{R}^n$ and $\alpha<0$ such 
that $(x-a)\cdot \nu(x)\leq \alpha$ for every $x\in \partial\Omega\setminus E$ 
one obtains from \eqref{Pohozaev3} that $u^\star\in H^1_0(\Omega)$.
\end{remark}

\bibliographystyle{amsplain}

\end{document}